\theoremstyle{plain}
\newtheorem{theorem}{Theorem}
\newtheorem{lemma}{Lemma}
\newtheorem{proposition}{Proposition}
\newtheorem{corollary}{Corollary}
\newtheorem*{theorem*}{Theorem}
\theoremstyle{definition}
\theoremstyle{remark}
\title[Splitting in a mod $\ell$ Heisenberg extension]{The decomposition of primes in nonabelian extensions of Heisenberg type and an analogue of Euler's criterion}
\author[D. Kim]{Dohyeong Kim}
\address[D. Kim]{Department of Mathematical Sciences and Institute for Data Innovation in Science, Seoul National University, GwanAk-Ro 1, Gwanak-Gu, Seoul 08826, Korea}
\author[I. Yang]{Ingyu Yang}
\address[I. Yang]{Department of Mathematical Sciences and Research Institute of Mathematics, Seoul National University, Gwanak-ro 1, Gwankak-gu, Seoul, South Korea 08826}
\date{Nov 2025} 
\newcommand{\ccc}{\mathcal{C}}
\newcommand{\cdd}{\mathcal{D}}
\newcommand{\chh}{\mathcal{H}}
\newcommand{\ckk}{\mathcal{K}}
\newcommand{\cnn}{\mathcal{N}}
\newcommand{\coo}{\mathcal{O}}
\newcommand{\crr}{\mathcal{R}}
\newcommand{\Z}{\mathbb{Z}}
\newcommand{\bff}{\mathbb{F}}
\newcommand{\bpp}{\mathbb{P}}
\newcommand{\bqq}{\mathbb{Q}}
\newcommand{\fp}{\mathfrak{p}}
\newcommand{\fq}{\mathfrak{q}}
\renewcommand{\ker}{\operatorname{Ker}}
\newcommand{\gal}{\operatorname{Gal}}
\newcommand{\frob}{\operatorname{Frob}}
\newcommand{\legendre}[2]{\genfrac{(}{)}{}{}{#1}{#2}}
\begin{document}
\begin{abstract}
 For primes $p$ and $\ell$ such that $\ell$ divides $p-1$, Hirano and Morishita constructed a nonabelian Galois extension of the function field $\bff_p(t)$ whose degree is $\ell^3$ and Galois group is of Heisenberg type. Here we analyze how primes of degree one decompose in such extensions. It amounts to investigating the decomposition of the principal ideal $(t-a)$ for $a \in \bff_p-\{0,1\}$ and our main result determines when it decomposes completely in terms of an explicit polynomial in $a$. It is reminiscent of Euler's criterion. The proof relies on both the group structure of the mod-$\ell$ Heisenberg group and the arithmetic of field extensions.
 

 
 
\end{abstract}
\maketitle

\section{Introduction}

 Since class field theory was founded in the early 20th century~\cite{CO}, the prime decomposition in abelian extensions of global field has been completely described. In particular, for quadratic extensions, the decomposition pattern is encoded in the Legendre symbol and the desired law corresponds to that of quadratic reciprocity. Combined with Euler's criterion, the law says
 \begin{align}\label{eq:1}
 \left(\frac p q \right) \equiv q^{(p-1)/2} \hspace{15pt} \text{(mod $p$)}
 \end{align}
 for distinct odd primes $p$ and $q$ such that $4$ divides $p-1$. In the language of class field theory, it relates the Artin symbols at $q$ in two abelian extensions $\bqq(\sqrt{p})$ and $\bqq(\zeta_{p})$ of $\bqq$, where $\zeta_m$ denotes a primitive $m$-th root of unity for $m\ge 1$. Despite the success of class field theory in the abelian setting, it remains a fascinating problem to find analogous laws governing the decomposition of primes in nonabelian extensions of global fields.

In this paper, we establish an analogue of \eqref{eq:1} for certain nonabelian extensions of function fields. The particular extensions we consider are those arising naturally from attempts to generalize Legendre symbols using Massey products. We mention that Massey products in Galois cohomology have received much attention in their own right, including some recent works \cite{Harpaz-Wittenberg, Maire-et-al}. For a specific context regarding the multilinear extension of the Legendre symbol, we refer interested readers to \cite{MO} for a survey and to closely related works \cite{AF, KU, AM3, AMN}. Our field extensions first appeared in \cite{HM}, which is recalled below. 

Let $\ell$ be a prime and $\bff_\ell$ the finite field with $\ell$ elements. Consider the Heisenberg group:
  \begin{equation*}
        \chh(\bff_\ell)\coloneq \Set{ 
        \begin{pmatrix}
        1 & * & * \\ 0 & 1 & * \\ 0 & 0 & 1
    \end{pmatrix}| *\in \bff_\ell}. 
    \end{equation*}
Temporarily, denote by $k$ an arbitrary field containing a primitive $\ell$-th root of unity $\zeta_\ell$. In \cite{HM}, the authors introduced a Galois extension $\crr^{(\ell)}$ of the function field $k(t)$ with Galois group $\chh(\bff_\ell)$, which we call a mod $\ell$ Heisenberg extension. It is defined as $\crr^{(\ell)} \coloneq k(t)(t^{1/\ell},(1-t)^{1/\ell},\epsilon_\ell(t)^{1/\ell})$ where $\epsilon_\ell(t)\coloneq \prod_{i=1}^{\ell-1}(1-\zeta_\ell^it^{1/\ell})^i$. They utilized $\crr^{(\ell)}$ to establish a relationship between two arithmetic invariants: the mod $\ell$ Milnor invariants $\mu_\ell(g; I)$ of Galois elements defined in \cite{KMT} and the mod $\ell$ Milnor invariants $\mu_\ell(J)$ of primes introduced in \cite{MO}. 
Also, they discovered analogies between $\crr^{(\ell)}$ and dilogarithmic functions. From this viewpoint, the authors of \cite{NA, SH} have explored related structures involving $\ell$-adic Galois polylogarithms and triple $\ell$-th power symbols. We remark that these triple power symbols are regarded as multilinear generalizations of Legendre symbols.

   



For our purpose, we will take $k=\bff_p$ for a prime $p$. Since we need a primitive $\ell$-th root of unity, we will always assume $\ell \mid p-1$. Our main result concerns the splitting of the prime ideal $(t-a)$ for $a \in \bff_p-\{0,1\}$ in the mod $\ell$ Heisenberg extension $\crr^{(\ell)}/\bff_p(t)$. To state it, we introduce certain polynomials that will play the role of $x^{(p-1)/2}$ for $\bqq(\sqrt{p})$ in \eqref{eq:1}. Put $\epsilon_{\ell,n}(t)=\prod_{i=1}^{\ell-1}(1-\zeta_\ell^{i+n}t^{1/\ell})^i$. Note that $\epsilon_{\ell,0}(t)$ is literally $\epsilon_\ell(t)$ which is a generator of $\crr^{(\ell)}$. Define the polynomial 
\begin{align}\label{def:a-l}
    A_\ell(x):=\frac{1}{\ell} \left(\sum_{j=0}^{\ell-1} \epsilon_{\ell,j}(x)^{\frac{p-1}{\ell}}\right).
\end{align}
For $\ell=2$, which is the case to be treated separately, it is simply $$A_2(x)=\frac{1}{2} \left((1-\sqrt{x})^{\frac{p-1}{2}}+(1+\sqrt{x})^{\frac{p-1}{2}}\right).$$ Although $A_2(x)$ involves $\sqrt x$, it is indeed a polynomial in $x$ since it is invariant under the substitution $\sqrt{x}\mapsto -\sqrt{x}$. Similarly, one can show that $A_\ell(x)$ is also a polynomial in $x$ using the substitution $t^{1/\ell}\mapsto\zeta_\ell t^{1/\ell}$. 

 Now, we state our main results which provide criteria for splitting of $(t-a)$ in $\crr^{(\ell)}/\bff_p(t)$. For $\ell\geq3$, we have:

\begin{restatable}{theorem}{splitodd}\label{thm:l-is-odd}
    Let $\ell$ be an odd prime, and $p$ a prime such that $\ell$ divides $p-1$. Suppose $a\in \bff_p-\{0,1\}$ satisfies $\legendre{a}{p}_\ell= \legendre{1-a}{p}_\ell=1$. Then, $(t-a)$ is totally decomposed in $\crr^{(\ell)}$ if and only if $A_\ell(a)=1$. 
\end{restatable}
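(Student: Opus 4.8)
The plan is to identify $A_\ell(a)$ with the value of the Frobenius of $(t-a)$ in the center of $\gal(\crr^{(\ell)}/\bff_p(t)) \cong \chh(\bff_\ell)$, and to read off the splitting criterion from this. Write $L = \bff_p(t)(t^{1/\ell},(1-t)^{1/\ell})$ for the fixed field of the center $Z$ of $\chh(\bff_\ell)$, so that $\gal(L/\bff_p(t)) \cong \chh(\bff_\ell)/Z \cong \bff_\ell^2$ and $\gal(\crr^{(\ell)}/L) = Z$. Fix $\alpha,\beta \in \bff_p$ with $\alpha^\ell = a$ and $\beta^\ell = 1-a$; these exist because the hypothesis $\legendre{a}{p}_\ell = \legendre{1-a}{p}_\ell = 1$ means $a$ and $1-a$ are $\ell$-th powers in $\bff_p$. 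For $a \neq 0,1$ the prime $(t-a)$ is unramified in $\crr^{(\ell)}$: the generators $t$, $1-t$ and $\epsilon_\ell(t)$ are units at every prime above it, since $\epsilon_{\ell,j}(a) = \prod_{i=1}^{\ell-1}(1-\zeta_\ell^{i+j}\alpha)^i$ vanishes only if some $\zeta_\ell^{i+j}\alpha = 1$, i.e. only if $a=1$. Hence $\frob_{(t-a)}$ is well defined.

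First I would reduce to the center. The image of $\frob_{(t-a)}$ in $\chh(\bff_\ell)/Z \cong \bff_\ell^2$ is computed in the two Kummer extensions $\bff_p(t)(t^{1/\ell})$ and $\bff_p(t)((1-t)^{1/\ell})$: its two coordinates are governed by $a^{(p-1)/\ell}$ and $(1-a)^{(p-1)/\ell}$, which both equal $1$ exactly under the hypothesis. Thus $\frob_{(t-a)}$ lies in $Z$. Since $Z$ is central, its conjugacy class is a single element $z_0 \in Z$, and $(t-a)$ splits completely in $\crr^{(\ell)}$ if and only if $z_0 = 1$. The hypothesis also makes $(t-a)$ split completely in $L$, producing $\ell^2$ degree-one primes $\fpp_{j,k}$ of $L$, indexed by the reductions $t^{1/\ell} \equiv \zeta_\ell^j\alpha$ and $(1-t)^{1/\ell} \equiv \zeta_\ell^k\beta$.

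The heart of the argument is the Frobenius of the degree-$\ell$ Kummer extension $\crr^{(\ell)} = L(\epsilon_\ell(t)^{1/\ell})$ at each $\fpp_{j,k}$. Under the identification $Z \cong \mu_\ell(\bff_p)$, $\sigma \mapsto \sigma(\epsilon_\ell^{1/\ell})/\epsilon_\ell^{1/\ell}$, this Frobenius is $\epsilon_\ell(t)^{(p-1)/\ell}$ reduced modulo $\fpp_{j,k}$. Reducing $\epsilon_\ell(t) = \prod_{i=1}^{\ell-1}(1-\zeta_\ell^i t^{1/\ell})^i$ at $\fpp_{j,k}$, where $t^{1/\ell}\equiv \zeta_\ell^j\alpha$, returns exactly $\epsilon_{\ell,j}(a)$, so the Frobenius equals $\omega_j := \epsilon_{\ell,j}(a)^{(p-1)/\ell} \in \mu_\ell(\bff_p)$, depending only on $j$. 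Now the Heisenberg structure does the decisive work. Any two primes of $\crr^{(\ell)}$ above $(t-a)$ are $\chh(\bff_\ell)$-conjugate, so their Frobenii are conjugate; being central, they all equal $z_0$, and restricting to $L$ identifies $z_0$ with the $\crr^{(\ell)}/L$-Frobenius at each $\fpp_{j,k}$. Therefore $\omega_j = z_0$ for every $j$, and averaging gives $A_\ell(a) = \frac{1}{\ell}\sum_{j=0}^{\ell-1}\omega_j = z_0$. In particular $A_\ell(a)$ is itself the central Frobenius, so $(t-a)$ splits completely if and only if $A_\ell(a) = 1$.

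The step I expect to be the main obstacle is the collapse $\omega_j = z_0$: a priori each local invariant $\omega_j$ depends on the chosen $\ell$-th root $\zeta_\ell^j\alpha$, and it is precisely the centrality of $Z$ in $\chh(\bff_\ell)$ that forces them to coincide. Making this rigorous requires checking that the identification $Z \cong \mu_\ell(\bff_p)$ used to read off $z_0$ is the very one used in each local Kummer computation, so that the equalities $\omega_j = z_0$ hold on the nose rather than up to an automorphism of $\mu_\ell(\bff_p)$; one must also confirm that $\frob_{(t-a)}$ restricts to the $\crr^{(\ell)}/L$-Frobenius compatibly at every $\fpp_{j,k}$. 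As a consistency check, the identity $\prod_{j=0}^{\ell-1}\epsilon_{\ell,j}(a) = (1-a)^{\ell(\ell-1)/2}$ yields $\prod_{j=0}^{\ell-1}\omega_j = (1-a)^{(\ell-1)(p-1)/2} = 1$, in agreement with $z_0^\ell = 1$.
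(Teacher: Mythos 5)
Your proof is correct, and it takes a genuinely different route from the paper. The paper's argument is computational: it first determines the ring of integers of $\ckk^{(\ell)}$, then constructs an explicit integral basis $\{\alpha_{i,j}^k\}$ of $\crr^{(\ell)}$ (the most technical part of the paper), factors $(t-a)$ in $\ckk^{(\ell)}$ into the $\ell^2$ primes $(t^{1/\ell}-\zeta_\ell^i x,\,(1-t)^{1/\ell}-\zeta_\ell^j y)$, and proves the key lemma---each such prime splits completely in $\crr^{(\ell)}/\ckk^{(\ell)}$ iff $\epsilon_\ell(a)^{(p-1)/\ell}=1$---by exhibiting a ring isomorphism $\bff_p[t][\alpha_{i,j}^k]/\fpp \cong \bff_p[\epsilon_\ell(t)^{1/\ell}]/(\epsilon_\ell(t)-\epsilon_\ell(a))$; finally it identifies $\epsilon_\ell(a)^{(p-1)/\ell}$ with $A_\ell(a)$ via the identity $\epsilon_{\ell,n}(t)=\epsilon_\ell(t)\cdot(\text{$\ell$-th power})\cdot(1-t)^{-n}$ and the hypothesis $\legendre{1-a}{p}_\ell=1$. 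You instead work entirely with Frobenius formalism: the hypothesis forces $\frob_{(t-a),\crr^{(\ell)}}$ into the center $Z=\gal(\crr^{(\ell)}/\ckk^{(\ell)})$, so the conjugacy class is a single element $z_0$; Kummer theory computes the $\crr^{(\ell)}/\ckk^{(\ell)}$-Frobenius at $\fpp_{j,k}$ as $\omega_j=\epsilon_{\ell,j}(a)^{(p-1)/\ell}$; and centrality collapses all $\omega_j$ to $z_0$, whence $A_\ell(a)=z_0$. The compatibility worries you flag are genuine but all resolve in the standard way: since $(t-a)$ splits completely in $\ckk^{(\ell)}$, the residue degree of $\fpp_{j,k}$ over $(t-a)$ is one, so $\frob_{\fqq/(t-a)}=\frob_{\fqq/\fpp_{j,k}}$, and the congruence $\sigma(\epsilon_\ell^{1/\ell})/\epsilon_\ell^{1/\ell}\equiv\epsilon_{\ell,j}(a)^{(p-1)/\ell} \pmod{\fqq}$ is an equality of constants because $\mu_\ell(\bff_p)$ injects into every residue field. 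Two remarks on the comparison: first, the collapse $\omega_j=z_0$ that you treat as the main obstacle also follows directly from the algebraic identity above (each $\omega_j$ equals $\epsilon_\ell(a)^{(p-1)/\ell}$ times $(1-a)^{-j(p-1)/\ell}=1$), so no group theory is strictly needed there---this is how the paper gets it; second, what your argument buys is the complete elimination of the integral-basis construction, making the theorem a soft consequence of class field theory plus Kummer theory, while what the paper's route buys is the explicit description of $\coo_{\crr^{(\ell)}}$, which is of independent interest and is presented as a main contribution.
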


In the case $\ell=2$, we have:
\begin{restatable}{theorem}{spliteven}\label{thm:l-is-two}
     Let $p$ be an odd prime and $a\in \bff_p$ such that $a\neq 0,1,\frac{1}{2}$. Let $n_a$ be the number of primes above $(t-a)$ in $\crr^{(2)}/\bff_p(t)$. Then, $n_a$ is determined by the value of $A_2(a)$;
    $$\begin{cases}
        n_a=8 &\text{if } A_2(a)=1\\
        n_a=4   & \text{if } A_2(a)=-1,0 \text{ or }A_2(a)^2=\frac{1}{1-a}\\ 
        n_a=2 & \text{if }A_2(a)^2=\frac{a}{1-a}.
    \end{cases}$$
\end{restatable}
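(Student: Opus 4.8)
The plan is to translate the quantity $n_a$ into a residue degree. Since $a\neq 0,1$, the prime $(t-a)$ is unramified in $\crr^{(2)}$: the only branch points of the three generators $\alpha=\sqrt t$, $\beta=\sqrt{1-t}$ and $\gamma=\sqrt{1+\sqrt t}$ lie over $t=0,1,\infty$. Hence $e=1$ and $n_a\cdot f=|\chh(\bff_2)|=8$, where $f$ is the common residue degree of the primes above $(t-a)$, equal to the order of Frobenius. Because the residue field is finite the decomposition group is cyclic, and $\chh(\bff_2)\cong D_4$ has cyclic subgroups only of orders $1,2,4$; thus $f\in\{1,2,4\}$ and $n_a\in\{8,4,2\}$. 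By Kummer--Dedekind the residue field is $\bff_p(\sqrt a,\sqrt{1-a},\sqrt{1+\bar\alpha})$, where $\bar\alpha\in\{\pm\sqrt a\}$ is the reduction of $\alpha$; this is built from at most two quadratic steps, which re-confirms $f\in\{1,2,4\}$. So everything reduces to computing $f$.

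I would compute $f$ by a case analysis on the quadratic character $\chi$ of $\bff_p$, applied to $a$ and $1-a$, using the tower $\bff_p(t)\subset M:=\bff_p(t)(\sqrt t,\sqrt{1-t})\subset\crr^{(2)}$. Since $\bff_p$ has a unique quadratic extension, $[\bff_p(\sqrt a,\sqrt{1-a}):\bff_p]$ is $1$ if both $a,1-a$ are squares and $2$ otherwise. Writing $u=1-\sqrt a$, $v=1+\sqrt a$ (so $uv=1-a$, $u+v=2$, $u-v=-2\sqrt a$), the remaining step is whether $1+\bar\alpha\in\{u,v\}$ becomes a square in the residue field of $M$. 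The essential computation is that when $a$ is a nonsquare, so that $\sqrt a\in\bff_{p^2}\setminus\bff_p$ and $v^p=u$, one has
\[
v^{(p^2-1)/2}=(v^{p+1})^{(p-1)/2}=(uv)^{(p-1)/2}=\chi(1-a).
\]
Thus $1+\sqrt a$ is a square in $\bff_{p^2}$ exactly when $1-a$ is a square in $\bff_p$. This is precisely what produces $f=4$: it occurs if and only if $a$ and $1-a$ are both nonsquares, forcing $\sqrt{1+\sqrt a}\in\bff_{p^4}\setminus\bff_{p^2}$; in all other cases the residue field already lies in $\bff_{p^2}$.

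It then remains to match each case with the stated invariant. If $a$ is a square, Euler's criterion gives $A_2(a)=\tfrac12(\chi(u)+\chi(v))$, which is $1$, $0$, or $-1$ according as $\{u,v\}$ are both squares, of mixed type, or both nonsquares; these give $f=1,2,2$ and hence $n_a=8,4,4$. If $a$ is a nonsquare, set $w=v^{(p-1)/2}$, so that $A_2(a)=\tfrac12(w+w^p)$ is a trace into $\bff_p$, with $w^2=v^{p-1}=u/v$ and $w^{p+1}=\chi(1-a)$, whence $w^p=\chi(1-a)\,w^{-1}$. A short manipulation then gives
\[
A_2(a)^2=\tfrac14\bigl(w^2+2\chi(1-a)+w^{-2}\bigr)=\frac{u^2+2\chi(1-a)\,uv+v^2}{4uv},
\]
which is $\tfrac{(u+v)^2}{4uv}=\tfrac{1}{1-a}$ when $\chi(1-a)=1$ (the case $f=2$, $n_a=4$) and $\tfrac{(u-v)^2}{4uv}=\tfrac{a}{1-a}$ when $\chi(1-a)=-1$ (the case $f=4$, $n_a=2$).

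Finally I would verify that the hypotheses $a\neq 0,1,\tfrac12$ make the three conditions in the statement mutually exclusive, so the assignment of $n_a$ is unambiguous: for instance $A_2(a)^2=\tfrac{1}{1-a}=\tfrac{a}{1-a}$ forces $a=1$, while $A_2(a)^2=\tfrac{a}{1-a}\in\{0,1\}$ forces $a\in\{0,\tfrac12\}$. The main obstacle is the middle step, namely recognizing that the residue degree can reach $4$ through the quadratic extension $\bff_{p^4}/\bff_{p^2}$ and pinning this down via the norm identity $v^{p+1}=uv=1-a$; once that interplay between the geometric covering and the constant-field extension is made explicit, the computation of $A_2(a)^2$ and the case matching are routine.
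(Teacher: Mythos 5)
Your argument is correct, and it reaches the theorem by a genuinely different route than the paper's. The paper's proof has three ingredients: (i) the conjugacy classes of $\chh(\bff_2)$, together with the observation that the $(1,2)$- and $(2,1)$-entries of $\frob_{(t-a),\crr^{(2)}}$ equal $\legendre{a}{p}$ and $\legendre{1-a}{p}$ (read off from the abelian subextensions $\bff_p(t)(\sqrt t)$ and $\bff_p(t)(\sqrt{1-t})$), which pins down the order of Frobenius, hence $n_a$, in every case except $\legendre{a}{p}=\legendre{1-a}{p}=1$; (ii) for that remaining case, a geometric argument on the smooth conic $U^2+V^2=2W^2$ covering the Fermat conic, showing that total splitting is equivalent to $A_2(a)=1$; (iii) a recursion for $x_n=\tfrac12\bigl((1-\sqrt a)^n+(1+\sqrt a)^n\bigr)$ translating the four sign patterns of $\bigl(\legendre{a}{p},\legendre{1-a}{p}\bigr)$ into the stated values of $A_2(a)$. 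You collapse (i) and (ii) into a single residue-field computation --- Kummer--Dedekind at $(t-a)$ identifies the residue field as $\bff_p(\sqrt a,\sqrt{1-a},\sqrt{1+\bar\alpha})$, and the norm identity $v^{p+1}=uv=1-a$ for $v=1+\sqrt a$ (when $a$ is a nonsquare) decides the last quadratic step --- and you replace (iii) by the closed-form manipulation with $w=v^{(p-1)/2}$, $w^p=\chi(1-a)w^{-1}$, which recovers exactly the paper's dictionary $A_2(a)^2=\tfrac1{1-a}$ or $\tfrac a{1-a}$. Your route is more elementary and self-contained (no projective model, no conjugacy-class bookkeeping) and makes transparent why $f=4$ occurs precisely when $a$ and $1-a$ are both nonsquares, via the constant-field jump $\bff_{p^4}/\bff_{p^2}$; the paper's route builds the Frobenius-entry formalism that is what actually generalizes to $\ell\ge3$ in Section 4, where no such norm shortcut exists and an explicit integral basis must be constructed instead. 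Two points you should make explicit in a final write-up: first, Kummer--Dedekind is being invoked for a non-monogenic order, so you should note that $\bff_p[t]_{(t-a)}[x,y,z]/(x^2-t,\,y^2-(1-t),\,z^2-(1+x))$ is free of rank $8$ with unit discriminant at $t=a$ (using $a\neq0,1$ and that the norm of $1+x$ is $1-t$), hence \'etale over the local ring and therefore equal to the integral closure there; second, your exclusivity check should also rule out $A_2(a)=\pm1$ occurring together with $A_2(a)^2=\tfrac1{1-a}$ (this forces $a=0$), which, combined with the cases you list, makes the assignment of $n_a$ unambiguous.
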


The reason we separate the case $\ell=2$ is reflected in the group structure of $\chh(\bff_\ell)$ which has an element of order $\ell^2$ precisely when $\ell=2$.
As another feature of the case $\ell=2$, we identify the morphism between algebraic curves underlying $\crr^{(\ell)}/k(t)$ which guide the necessary computation.
For a general $\ell$, we do not have such geometric interpretation and rely on algebraic manipulations.
Despite of the difference, a key ingredient common to all cases is a symmetry with respect to action of the Galois group.
The construction of such a basis seems new for $\ell \ge 3$ and it is the most technical part of the paper.


We outline the rest of the paper. Section~\ref{sec:prelim} contains preliminaries on algebraic number theory, class field theory, and basics on mod $\ell$ Heisenberg extensions. Section~\ref{sec:l-is-two} proves Theorem~\ref{thm:l-is-two} which gives a criterion for splitting of $(t-a)$ in $\crr^{(2)}$. In Section~\ref{sec:l-is-odd}, we prove Theorem~\ref{thm:l-is-odd} which gives splitting of $(t-a)$ in $\crr^{(\ell)}$ for $\ell\geq 3$ by calculating the ring of integers explicitly. 


\subsection*{Acknowledgement}
D. Kim was supported by the National Research Foundation of Korea (NRF) grants.\footnote{No. RS-2023-00301976 funded by the Ministry of Education and No. RS-2025-02293115 funded by the Ministry of Science and ICT} I. Yang was supported by the 2025 Undergraduate Research Internship Program from College of Natural Sciences, Seoul National University.

\section{Preliminaries}\label{sec:prelim}
\subsection{Frobenius conjugacy classes}
In a global field $K$, let $\coo_K$ denote the maximal order of $K$.
For a Galois extension $K'/K$ between global fields and a prime $\fp$ of $K$, denote by $\frob_{\fp,K'} \subset\gal(K'/K)$ the Frobenius conjugacy class.
We summarize its well-known properties.
\begin{proposition}\label{prop:cft}
    Let $L$ be a Galois extension of $K$. Suppose $\fp$ is unramified in $L$, and $\coo_K/\fp$ is a finite field. 
    \begin{enumerate}
        \item $\frob_{\fp,L}=\{1\}$ if and only if $\fp$ is totally decomposed in $L$. 
        \item Let $\fq$ be a prime ideal of $\mathcal O_L$ lying over $\fp$, and $f$ be the order of $\frob_{\fp,\fq}\in \gal(L/K)$. Then, the number of prime ideals of $\coo_L$ lying above $\fp$ equals to $\frac{1}{f}[L:K]$. 
        \item If $L$ is an abelian extension of $K$, then $\frob_{\fp,L}$ is the unique element of $\gal(L/K)$ such that the map $\coo_L/\fp\coo_L\to \coo_L/\fp\coo_L$ induced by it coincides with $x\mapsto x^{\#(\coo_K/\fp)}$.
        \item If $L'$ is a Galois extension of $K$ such that $K\subset L'\subset L$, then the image of $\frob_{\fp,L} \subset \gal(L/K)$ in $\gal(L'/K)$ coincides with $\frob_{\fp,L'}$.  
    \end{enumerate}
\end{proposition}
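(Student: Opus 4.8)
The plan is to reduce everything to the standard theory of decomposition and inertia groups. Write $q=\#(\coo_K/\fp)$ and fix a prime $\fq$ of $\coo_L$ above $\fp$. First I would recall that the decomposition group $D_\fq=\{\sigma\in\gal(L/K):\sigma\fq=\fq\}$ surjects onto the Galois group of the residue extension $(\coo_L/\fq)/(\coo_K/\fp)$ with kernel the inertia group $I_\fq$, and that the hypothesis that $\fp$ is unramified in $L$ means precisely $I_\fq=1$. Hence $D_\fq$ is cyclic, isomorphic to $\gal\big((\coo_L/\fq)/(\coo_K/\fp)\big)$, which is generated by the $q$-power map $x\mapsto x^q$. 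By definition $\frob_{\fp,\fq}$ is the unique $\sigma\in D_\fq$ lifting this generator, i.e. the unique $\sigma$ with $\sigma\fq=\fq$ and $\sigma(x)\equiv x^q\pmod{\fq}$ for all $x\in\coo_L$. Since $\gal(L/K)$ acts transitively on the primes above $\fp$ and $\frob_{\fp,\tau\fq}=\tau\,\frob_{\fp,\fq}\,\tau^{-1}$, these elements form a single conjugacy class, which is $\frob_{\fp,L}$.

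With this setup, parts (1) and (2) are immediate from counting. The order of $\frob_{\fp,\fq}$ equals $|D_\fq|=f$, the residue degree $f(\fq/\fp)$, because inertia is trivial. For (1), $\frob_{\fp,L}=\{1\}$ means $f=1$, and since $\fp$ is unramified every ramification index is $1$ as well; by the fundamental identity $\sum_i e_i f_i=[L:K]$ this is equivalent to there being exactly $[L:K]$ primes above $\fp$, i.e. $\fp$ is totally decomposed. For (2), transitivity of the Galois action forces all residue degrees to equal the common value $f$ and all ramification indices to be $1$; the same identity then gives that the number $g$ of primes above $\fp$ satisfies $gf=[L:K]$, which is the claim.

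For part (3), the abelian hypothesis makes every conjugacy class a singleton, so $\frob_{\fp,L}$ is a single element $\sigma$, and all the decomposition groups $D_\fq$ coincide. I would verify that $\sigma$ induces $x\mapsto x^q$ on the whole quotient $\coo_L/\fp\coo_L$, not merely modulo the single $\fq$ it is attached to: for any other prime $\fq'=\tau\fq$, conjugating the defining congruence by $\tau$ and using $\tau\sigma\tau^{-1}=\sigma$ (commutativity) shows $\sigma(x)\equiv x^q\pmod{\fq'}$ as well. Since $\fp$ is unramified, $\fp\coo_L=\prod_{\fq'}\fq'$ and the Chinese remainder theorem identifies $\coo_L/\fp\coo_L$ with $\prod_{\fq'}\coo_L/\fq'$; thus $\sigma$ acts as $x\mapsto x^q$ on $\coo_L/\fp\coo_L$. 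For uniqueness, any $\sigma'$ inducing the same map makes $\sigma'\sigma^{-1}$ act trivially on $\coo_L/\fp\coo_L$; such an element fixes each factor (hence each $\fq'$, so lies in the common $D_\fq$) and induces the identity on each residue field (hence lies in the trivial inertia group), forcing $\sigma'\sigma^{-1}=1$.

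Finally, part (4) follows from the characterization together with functoriality of restriction. Restriction gives a surjection $\gal(L/K)\to\gal(L'/K)$, and I would check that it sends $\frob_{\fp,\fq}$ to the element fixing $\fq'=\fq\cap\coo_{L'}$ and inducing $x\mapsto x^q$ on $\coo_{L'}/\fq'$, which by the characterizing property is exactly $\frob_{\fp,\fq'}$; passing to images of conjugacy classes under this surjection yields $\frob_{\fp,L'}$. I do not anticipate a genuine obstacle, as the statement is foundational; the one point demanding care is the existence-and-uniqueness argument in (3), where the abelian hypothesis must be used both to transport the Frobenius congruence to every prime above $\fp$ and to pin down $\sigma$ uniquely via faithfulness of the residue action in the unramified case.
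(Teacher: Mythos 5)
Your proof is correct. The paper does not argue this proposition at all---it simply cites a standard reference (Prop.~6.29 of the cited text)---and your argument via decomposition groups, trivial inertia, the fundamental identity $efg=[L:K]$, and the Chinese remainder theorem for part (3) is exactly the standard proof such a reference contains; the one delicate point, existence and uniqueness in (3) using commutativity to transport the Frobenius congruence to every prime above $\fp$ and trivial inertia to pin down $\sigma$, is handled correctly.
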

\begin{proof}
    See \cite[Prop.\,6.29]{KA}.
\end{proof}


\subsection{auxiliary facts about integral bases}

\begin{proposition}\label{prop:compositum}
    Let $L/K$ and $L'/K$ be Galois extensions of degrees $n$ and $n'$, respectively, such that $L\cap L'=K$. Let $\omega_1,\dots,\omega_n$, and $\omega_1',\dots,\omega_{n'}'$ be integral bases of $L/K$ and $L'/K$ with discriminants $d$ and $d'$, respectively. Suppose that $d$ and $d'$ are relatively prime in the sense that $xd+x'd'=1$, for suitable $x,x'\in \coo_K$. Then, $\{\omega_i\omega_j'\colon 1 \le i \le n, 1 \le j' \le n'\}$ is an integral basis of $LL'$, of discriminant $d^{n'}d'^n$. 
\end{proposition}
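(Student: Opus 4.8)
The plan is to prove the two assertions in turn: first that $\{\omega_i\omega_j'\}$ is an integral basis of $LL'$, and then, as a by‑product, that its discriminant is $d^{n'}d'^{n}$. Since $L/K$ is Galois and $L\cap L'=K$, the fields $L$ and $L'$ are linearly disjoint over $K$, so $[LL':K]=nn'$ and the $nn'$ products $\omega_i\omega_j'$ form a $K$-basis of $LL'$; being products of algebraic integers they lie in $\coo_{LL'}$, so the $\coo_K$-module $M:=\sum_{i,j}\coo_K\,\omega_i\omega_j'$ satisfies $M\subseteq\coo_{LL'}$. The entire content of the proposition is the reverse inclusion $\coo_{LL'}\subseteq M$, and this is exactly where the coprimality hypothesis enters.

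Before treating the inclusion I would record the discriminant, which only uses linear disjointness. Identifying $LL'$ with $L\otimes_K L'$ as $K$-algebras gives the factorization $\operatorname{Tr}_{LL'/K}(\alpha\beta)=\operatorname{Tr}_{L/K}(\alpha)\,\operatorname{Tr}_{L'/K}(\beta)$ for $\alpha\in L$ and $\beta\in L'$. Consequently the trace-form matrix of $\{\omega_i\omega_j'\}$ is the Kronecker product of the trace-form matrices of $\{\omega_i\}$ and $\{\omega_j'\}$, whose determinants are $d$ and $d'$; since the determinant of the Kronecker product of an $n\times n$ and an $n'\times n'$ matrix equals $d^{n'}d'^{n}$, the discriminant of $\{\omega_i\omega_j'\}$ is $d^{n'}d'^{n}$, as asserted.

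The heart of the argument is $\coo_{LL'}\subseteq M$. Given $\alpha\in\coo_{LL'}$, I expand it in the $L'$-basis $\{\omega_i\}$ of $LL'$ as $\alpha=\sum_i b_i\omega_i$ with $b_i\in L'$, and test against each $\omega_m$ by the relative trace: setting $v_m:=\operatorname{Tr}_{LL'/L'}(\alpha\omega_m)$ and using $\operatorname{Tr}_{LL'/L'}(\omega_i\omega_m)=\operatorname{Tr}_{L/K}(\omega_i\omega_m)=:t_{im}$ yields the linear system $\sum_i t_{im}b_i=v_m$. Here $v_m\in\coo_{L'}$ because $\alpha\omega_m\in\coo_{LL'}$ and the relative trace carries integers to integers, while $T=(t_{im})\in M_n(\coo_K)$ has $\det T=d$. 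Multiplying by the adjugate of $T$ gives $d\,b_i\in\coo_{L'}$ for every $i$; writing $b_i=\sum_j c_{ij}\omega_j'$ with $c_{ij}\in K$ and using that $\{\omega_j'\}$ is an $\coo_K$-basis of $\coo_{L'}$ forces $d\,c_{ij}\in\coo_K$. Running the identical computation with the roles of $L$ and $L'$ exchanged gives $d'\,c_{ij}\in\coo_K$. Finally $c_{ij}=(xd+x'd')c_{ij}=x(d\,c_{ij})+x'(d'\,c_{ij})\in\coo_K$, so $\alpha\in M$, completing the inclusion.

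The main obstacle is the linear-disjointness bookkeeping rather than any deep computation: I must justify that $\{\omega_i\}$ remains an $L'$-basis of $LL'$, that the relative trace of an element of $L$ computed in $LL'/L'$ agrees with its trace in $L/K$, and that $\operatorname{Tr}_{LL'/L'}$ maps $\coo_{LL'}$ into $\coo_{L'}$. Each is standard once one has the isomorphism $LL'\cong L\otimes_K L'$, but all three are needed, for both the Kronecker-product step and the adjugate step. With them in place, the only genuinely arithmetic input is the relation $xd+x'd'=1$, which is precisely what cancels the two denominators $d$ and $d'$ simultaneously.
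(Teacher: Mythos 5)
Your proof is correct, but note that the paper does not actually prove this proposition: its ``proof'' is a citation to Neukirch's Proposition 2.11. The textbook argument behind that citation is a cousin of yours with one structural difference: Neukirch expands $\alpha=\sum_i\beta_i\omega_i$ with $\beta_i\in L'$, applies the $n$ embeddings of $LL'/L'$ (which restrict to the embeddings of $L/K$), and clears denominators by Cramer's rule using that $\det(\sigma_k(\omega_i))^2=d$; you instead use the trace Gram matrix $T=(\operatorname{Tr}_{L/K}(\omega_i\omega_m))$ and the adjugate identity $\operatorname{adj}(T)\,T=dI$. Since $T=A^{t}A$ for the embedding matrix $A$, this is essentially the same cancellation, but your version buys two things. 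First, the discriminant value $d^{n'}d'^{\,n}$ falls out for free from the Kronecker-product structure of the Gram matrix of $\{\omega_i\omega_j'\}$, rather than requiring a separate computation with conjugates. Second, and more relevantly for this paper, your argument is phrased over a general Dedekind domain $\coo_K$ with separable (here Galois) extensions, whereas Neukirch states the result for number fields over $\mathbb{Z}$; the paper applies the proposition with $K=\bff_p(t)$ and $\coo_K=\bff_p[t]$, so the reader of the paper must in any case check that the classical proof transfers to the function-field setting --- your write-up does exactly that. The one place where the hypotheses genuinely enter, and which you correctly flag rather than gloss over, is the identification $\operatorname{Tr}_{LL'/L'}\big|_{L}=\operatorname{Tr}_{L/K}$, which rests on restriction giving an isomorphism $\gal(LL'/L')\cong\gal(L/K)$; this is precisely where ``$L/K$ Galois and $L\cap L'=K$'' is used, and it also justifies your linear-disjointness and integrality-of-trace steps.
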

\begin{proof}
    See \cite[Prop.\,2.11]{NE}.
\end{proof}

\begin{proposition}\label{prop:intclosure}
    Let $m \ge 2$ be an integer. The integral closure of $k[t]$ in $k(t^{1/m})$ is $k[t^{1/m}]$. 
\end{proposition}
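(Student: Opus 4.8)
The plan is to reduce everything to the standard fact that a polynomial ring in one variable over a field is integrally closed. Write $s := t^{1/m}$, so that $k(t^{1/m}) = k(s)$ is a rational function field in the single variable $s$, while $k[t^{1/m}] = k[s]$ and $k[t] = k[s^m]$ sits inside $k[s]$ as the subring generated by $s^m = t$. Under this identification the claim becomes the assertion that the integral closure of $k[s^m]$ in $k(s)$ equals $k[s]$, which I would prove by establishing the two inclusions separately.

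First I would dispose of the easy inclusion $k[s] \subseteq \overline{k[s^m]}$. The element $s$ satisfies the monic polynomial $X^m - s^m = X^m - t \in k[s^m][X]$, so $s$ is integral over $k[s^m]$; since $k[s] = k[s^m][s]$ is module-finite over $k[s^m]$, every element of $k[s]$ is integral over $k[s^m]$. Hence $k[s]$ is contained in the integral closure of $k[s^m]$ in $k(s)$.

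For the reverse inclusion I would invoke the normality of $k[s]$. Because $k[s]$ is a polynomial ring over a field it is a principal ideal domain, in particular a unique factorization domain, and therefore integrally closed in its fraction field $k(s)$. Now take any $\alpha \in k(s)$ integral over $k[s^m]$. Since $k[s^m] \subseteq k[s]$, the element $\alpha$ is \emph{a fortiori} integral over $k[s]$, and as $k(s)$ is the fraction field of $k[s]$, integral closedness forces $\alpha \in k[s]$. Combining the two inclusions yields the desired equality.

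I do not expect a serious obstacle: the whole content is packaged into the normality of the one-variable polynomial ring $k[s]$, together with the elementary observation that integrality over the smaller ring $k[s^m]$ implies integrality over the larger ring $k[s]$. The only point worth flagging is that none of this depends on the characteristic of $k$ or on the irreducibility of $X^m - t$, since the argument never appeals to the degree $[k(s):k(s^m)]$.
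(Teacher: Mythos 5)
Your proof is correct and follows essentially the same route as the paper: one inclusion comes from $t^{1/m}$ being a root of the monic polynomial $X^m - t$ over $k[t]$, and the reverse inclusion comes from $k[t^{1/m}]$ being a UFD, hence integrally closed in its fraction field $k(t^{1/m})$. The only difference is that you spell out the intermediate step (integrality over $k[t]$ implies integrality over the larger ring $k[t^{1/m}]$) that the paper leaves implicit.
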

\begin{proof}
Let $R$ be the integral closure of $k[t]$ in $k(t^{1/m})$. Clearly, $t^{1/m}$ is integral over $k[t]$, so $k[t^{1/m}]\subset R$. Since $k[t^{1/m}]$ is a unique factorization domain, it is integrally closed in its field of fractions. We conclude $k[t^{1/m}]=R$.
\end{proof}

\subsection{Mod $\ell$ Heisenberg Extension}\label{sec:Heisenberg}
From now on, let $p$ and $\ell$ be primes such that $\ell$ divides $p-1$. 
Here, we describe mod $\ell$ Heisenberg extensions. 

Let $\ckk^{(\ell)}$ be the extension of $k(t)$ defined by $$\ckk^{(\ell)}\coloneq k(t)(t^{1/\ell},(1-t)^{1/\ell}).$$ This is the compositum of two Kummer extension of $k(t)$ and its Galois group is $\Z/\ell\Z\times \Z/\ell\Z$. We choose its generators $\alpha,\beta$ 
which are characterized by

\begin{align*}
    \alpha(t^{1/\ell}) &\coloneq \zeta_\ell t^{1/\ell}, ~ &\alpha((1-t)^{1/\ell})&\coloneq(1-t)^{1/\ell},\\
    \beta(t^{1/\ell}) &\coloneq t^{1/\ell}, ~ &\beta((1-t)^{1/\ell})&\coloneq\zeta_\ell(1-t)^{1/\ell}.
\end{align*}
By Kummer theory, the extension $\ckk^{(\ell)}/k(t)$ is unramified outside $\{\infty,0,1\}$. It is straightforward to check that $\ckk^{(\ell)}$ corresponds to the Fermat plane curve $\ccc^{(\ell)}:X^\ell+Y^\ell=Z^\ell$ in $\bpp^2$ via the covering map $\ccc^{(\ell)}\to \bpp^1$ given by $(X:Y:Z)\mapsto(X^\ell:Z^\ell)$. 

Recall from the introduction that we set $\epsilon_\ell(t) = \prod_{i=1}^{\ell-1}(1-\zeta_\ell^it^{1/\ell})^i$ and the definition the field $\crr^{(\ell)}$ can be rephrased as $\crr^{(\ell)}=\ckk^{(\ell)}(\epsilon_\ell(t)^{1/\ell})$.

\begin{proposition}\label{prop:Heisenberg}
    Using the notation above, $\crr^{(\ell)}/k(t)$ is a Galois extension such that $\gal(\crr^{(\ell)}/k(t))$ is isomorphic to $\chh(\bff_\ell)$.
    Moreover, it is unramified outside $\{\infty, 0, 1\}$. 
\end{proposition}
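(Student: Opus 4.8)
The plan is to present $\crr^{(\ell)}=\ckk^{(\ell)}(\epsilon^{1/\ell})$, where $\epsilon:=\epsilon_\ell(t)$, as a degree-$\ell$ Kummer extension of $\ckk^{(\ell)}$ and to read off $\gal(\crr^{(\ell)}/k(t))$ from how the generators $\alpha,\beta$ of $\gal(\ckk^{(\ell)}/k(t))$ move $\epsilon$. Writing $s=t^{1/\ell}$ and $r=(1-t)^{1/\ell}$, everything hinges on the elementary identity $\prod_{i=0}^{\ell-1}(1-\zeta_\ell^i X)=1-X^\ell$ applied with $X=s$. I would introduce three candidate automorphisms of $\crr^{(\ell)}$: the generator $\gamma$ of $\gal(\crr^{(\ell)}/\ckk^{(\ell)})$ with $\gamma(\epsilon^{1/\ell})=\zeta_\ell\epsilon^{1/\ell}$, together with lifts $\tilde\alpha,\tilde\beta$ of $\alpha,\beta$; the whole argument then reduces to checking these are well defined and computing the relations among them.

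The computational heart is the action of $\alpha,\beta$ on $\epsilon$. Since $\beta$ fixes $s$, it fixes $\epsilon$ outright. Since $\alpha$ sends $s\mapsto\zeta_\ell s$, a reindexing of the product together with the identity above yields
\[
\frac{\alpha(\epsilon)}{\epsilon}=\frac{(1-s)^{\ell}}{1-s^\ell}=u^\ell,\qquad u:=\frac{1-s}{r}\in\ckk^{(\ell)},
\]
so $X^\ell-\alpha(\epsilon)=X^\ell-u^\ell\epsilon$ and $X^\ell-\beta(\epsilon)=X^\ell-\epsilon$ both split in $\crr^{(\ell)}$. The two facts I would extract about $u$ are $\beta(u)=\zeta_\ell^{-1}u$ (because $\beta$ scales $r$ by $\zeta_\ell$) and $\prod_{i=0}^{\ell-1}\alpha^i(u)=(1-s^\ell)/r^\ell=1$. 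With these, the lifts $\tilde\alpha(\epsilon^{1/\ell})=u\,\epsilon^{1/\ell}$ and $\tilde\beta(\epsilon^{1/\ell})=\epsilon^{1/\ell}$ are well defined automorphisms, since raising to the $\ell$-th power recovers $\alpha(\epsilon)=u^\ell\epsilon$ and $\beta(\epsilon)=\epsilon$ respectively.

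From here I would assemble the statement. First, $\epsilon$ is not an $\ell$-th power in $\ckk^{(\ell)}$: if $\epsilon=\theta^\ell$ then $\alpha(\theta)/\theta=\zeta_\ell^a u$ and $\beta(\theta)/\theta=\zeta_\ell^b$ for some $a,b$, and evaluating $\alpha\beta(\theta)=\beta\alpha(\theta)$ while using $\beta(u)=\zeta_\ell^{-1}u$ forces $\zeta_\ell=1$, a contradiction; hence $[\crr^{(\ell)}:\ckk^{(\ell)}]=\ell$ and $[\crr^{(\ell)}:k(t)]=\ell^3$. Normality, and therefore (as $\ell\neq\operatorname{char}k$) Galois-ness, follows because $\sigma(\epsilon)/\epsilon$ is an $\ell$-th power in $\ckk^{(\ell)}$ for every $\sigma\in\gal(\ckk^{(\ell)}/k(t))$, so every conjugate of $\epsilon^{1/\ell}$ already lies in $\crr^{(\ell)}$. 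The elements $\tilde\alpha,\tilde\beta,\gamma$ generate $G:=\gal(\crr^{(\ell)}/k(t))$, and I would verify the Heisenberg relations: $\gamma$ is central because the conjugation action of $G$ on $\gal(\crr^{(\ell)}/\ckk^{(\ell)})\cong\mu_\ell$ is the mod-$\ell$ cyclotomic character, which is trivial since $\zeta_\ell\in k(t)$ (this can also be checked directly on $\epsilon^{1/\ell}$); one has $\tilde\beta^\ell=\gamma^\ell=1$ trivially and $\tilde\alpha^\ell=1$ because $\prod_{i}\alpha^i(u)=1$; and the commutator $[\tilde\alpha,\tilde\beta]$ fixes $\ckk^{(\ell)}$ and sends $\epsilon^{1/\ell}$ to $\beta(u)^{-1}u\,\epsilon^{1/\ell}=\zeta_\ell\,\epsilon^{1/\ell}$ (using $\alpha\beta\alpha^{-1}=\beta$ on $\ckk^{(\ell)}$ and $\beta(u)=\zeta_\ell^{-1}u$), whence $[\tilde\alpha,\tilde\beta]=\gamma$. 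These are exactly the defining relations of $\chh(\bff_\ell)$, a group of order $\ell^3$, so the resulting surjection $\chh(\bff_\ell)\twoheadrightarrow G$ is an isomorphism. Finally, $\epsilon$ is a nonzero polynomial in $s$ vanishing only where $s^\ell=1$ (i.e.\ over $t=1$) and with poles only over $t=\infty$, so the valuation of $\epsilon$ is $0$ at every place not above $\{0,1,\infty\}$; Kummer theory then makes $\crr^{(\ell)}/\ckk^{(\ell)}$ unramified there, and combining with the stated ramification of $\ckk^{(\ell)}/k(t)$ gives the claim.

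The main obstacle is the commutator computation $[\tilde\alpha,\tilde\beta]=\gamma$: it is precisely what separates the nonabelian Heisenberg extension from a split situation, and it rests entirely on the asymmetry that $\beta$ scales $u$ by $\zeta_\ell^{-1}$ while $\alpha$ contributes no root of unity. Since the same identity $\beta(u)=\zeta_\ell^{-1}u$ also rules out $\epsilon$ being an $\ell$-th power, a single clean calculation governs both the degree and the group law; the one place demanding genuine care is tracking the inverses and the order of composition in that commutator.
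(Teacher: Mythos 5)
Your proposal is correct, but there is nothing in the paper to compare it against line-by-line: the paper's ``proof'' of this proposition is the citation to Hirano--Morishita [HM, Thm.\,2.1.4], so what you have written is a self-contained reconstruction of the outsourced argument. The computations all check out: $\alpha(\epsilon)/\epsilon=(1-s)^{\ell}/(1-s^{\ell})=u^{\ell}$ with $u=(1-s)/r$ (this is exactly the paper's displayed characterization $\tilde\alpha(\epsilon_\ell(t)^{1/\ell})=\tfrac{1-t^{1/\ell}}{(1-t)^{1/\ell}}\epsilon_\ell(t)^{1/\ell}$, stated after the proposition without derivation); $\beta(u)=\zeta_\ell^{-1}u$; $\prod_{i=0}^{\ell-1}\alpha^{i}(u)=(1-s^{\ell})/r^{\ell}=1$, which gives $\tilde\alpha^{\ell}=1$; centrality of $\gamma$ because $G$ fixes $\mu_\ell\subset k(t)$; and the commutator computation $[\tilde\alpha,\tilde\beta]=\gamma$, which indeed hinges entirely on $\beta(u)=\zeta_\ell^{-1}u$. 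Your irreducibility argument (if $\epsilon=\theta^{\ell}$ then comparing $\alpha\beta(\theta)$ with $\beta\alpha(\theta)$ forces $\zeta_\ell=1$) is a genuinely nice touch: it pins down $[\crr^{(\ell)}:k(t)]=\ell^{3}$, which is what turns the relation-checking into the isomorphism $\chh(\bff_\ell)\cong\gal(\crr^{(\ell)}/k(t))$ by order counting, and it works uniformly for $\ell=2$ (where the presentation yields the dihedral group of order $8$, which is $\chh(\bff_2)$). The ramification argument is also sound: the divisor of $\epsilon$ on the Fermat curve is supported over $t=1$ and $t=\infty$, so the Kummer extension $\crr^{(\ell)}/\ckk^{(\ell)}$ is unramified away from $\{0,1,\infty\}$, and ramification indices multiply in towers. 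One cosmetic ordering issue, not a gap: you introduce the lifts $\tilde\alpha,\tilde\beta$ before proving $X^{\ell}-\epsilon$ irreducible over $\ckk^{(\ell)}$, yet the existence of an extension of $\alpha$ with $\epsilon^{1/\ell}\mapsto u\,\epsilon^{1/\ell}$ presupposes knowing the minimal polynomial of $\epsilon^{1/\ell}$. Since your irreducibility proof uses only $\alpha,\beta$ acting on $\ckk^{(\ell)}$ and never the lifts, the fix is simply to prove first that $\epsilon$ is not an $\ell$-th power in $\ckk^{(\ell)}$, and only then define $\tilde\alpha,\tilde\beta$.
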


\begin{proof}
    See \cite[Thm.\,2.1.4]{HM}.
\end{proof}

Note that we have extensions $\tilde{\alpha},\tilde{\beta}\in \gal(\crr^{(\ell)}/k(t))$ of $\alpha,\beta\in \gal(\ckk^{(\ell)}/k(t))$, respectively, characterized by
\begin{align*}
    \tilde\alpha(t^{1/\ell}) &\coloneq \zeta_\ell t^{1/\ell}, ~ &\tilde\alpha((1-t)^{1/\ell})&\coloneq(1-t)^{1/\ell},~  &\tilde\alpha(\epsilon_\ell(t)^{1/\ell})&\coloneq\frac{1-t^{1/\ell}}{(1-t)^{1/\ell}}\epsilon_\ell(t)^{1/\ell}\\
    \tilde\beta(t^{1/\ell}) &\coloneq t^{1/\ell}, ~ &\tilde\beta((1-t)^{1/\ell})&\coloneq\zeta_\ell(1-t)^{1/\ell} ~ &\tilde \beta(\epsilon_\ell(t)^{1/\ell})&\coloneq \epsilon_\ell(t)^{1/\ell}.
\end{align*}
Then, the correspondence
$$\tilde\alpha\mapsto \begin{pmatrix}
    1&1&0 \\ 0&1&0 \\ 0&0&1
\end{pmatrix},~~ \tilde \beta\mapsto\begin{pmatrix}
    1&0&0 \\ 0&1&1 \\ 0&0&1
\end{pmatrix},$$
induces the isomorphism $\gal(\crr^{(\ell)}/k(t))\cong \chh(\bff_\ell).$

\section{$\ell=2$}\label{sec:l-is-two}
In this section, we treat the case $\ell = 2$ by proving Theorem~\ref{thm:l-is-two}. In particular, $a$ will denote an element of $\bff_p-\{0,1\}.$ We begin with a group-theoretic lemma.

\begin{lemma}\label{lem:l-is-two-order}
    All elements in the mod $2$ Heisenberg group have order of at most $4$.
\end{lemma}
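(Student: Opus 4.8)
The plan is to prove the statement by a direct computation in the eight-element group $\chh(\bff_2)$, exploiting the fact that squaring any element lands in the center, after which a second squaring annihilates everything because we are in characteristic $2$. No structural machinery is needed; the content is entirely the observation that the exponent of $\chh(\bff_2)$ is $4$.

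First I would fix an arbitrary element of the mod $2$ Heisenberg group,
$$M=\begin{pmatrix} 1 & a & c \\ 0 & 1 & b \\ 0 & 0 & 1 \end{pmatrix}, \qquad a,b,c\in\bff_2,$$
and compute $M^2$ by matrix multiplication. The two superdiagonal entries double and therefore vanish modulo $2$, and in the upper-right corner the term $2c$ likewise vanishes, leaving only the product $ab$:
$$M^2=\begin{pmatrix} 1 & 2a & 2c+ab \\ 0 & 1 & 2b \\ 0 & 0 & 1 \end{pmatrix}=\begin{pmatrix} 1 & 0 & ab \\ 0 & 1 & 0 \\ 0 & 0 & 1 \end{pmatrix}.$$
The decisive point is that $M^2$ is central: its only possibly nontrivial entry sits in the upper-right corner, which generates the center of $\chh(\bff_2)$. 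Squaring once more merely doubles that corner entry, so
$$M^4=(M^2)^2=\begin{pmatrix} 1 & 0 & 2ab \\ 0 & 1 & 0 \\ 0 & 0 & 1 \end{pmatrix}=I.$$
Hence the order of every $M$ divides $4$, which is exactly the assertion.

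I expect no genuine obstacle, since the argument reduces to a one-line computation over $\bff_2$. The only point worth flagging is the contrast with odd $\ell$: the same computation for a general $M$ gives $M^n$ with upper-right entry $nc+\binom{n}{2}ab$, and when $\ell$ is odd the factor $\binom{\ell}{2}=\ell(\ell-1)/2$ is divisible by $\ell$, so that $M^\ell=I$ and the exponent drops to $\ell$. It is precisely the failure of this divisibility at $\ell=2$ that produces an element of order $\ell^2=4$ (take $a=b=1$, $c=0$), as recorded in the remark following Theorem~\ref{thm:l-is-two}.
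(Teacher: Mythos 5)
Your proof is correct and is essentially the paper's argument made explicit: the paper writes $M=I+B$ with $B$ strictly upper-triangular and uses the characteristic-$2$ identity $(I+B)^4=I+B^4=I$, which is exactly your entry-wise computation that $M^2$ is central with corner entry $ab$ and that squaring again kills it. Your closing remark (the element with $a=b=1$, $c=0$ having order $4$, versus exponent $\ell$ for odd $\ell$) correctly reproduces the paper's Lemma~\ref{lem:l-is-odd-order} and the dichotomy noted after the main theorems.
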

\begin{proof}
    An arbitrary $A\in \chh(\bff_2)$ can be written as $A=I+B$ where $I$ is the $3\times 3$ identity matrix and $B$ is a strict upper-triangular matrix. Then, $A^4=(I+B)^4=I+B^4=I$ gives the required result. 
\end{proof}
\begin{proposition}\label{prop:l-is-two-inert}
    $(t-a)$ is not inert in the extension $\crr^{(\ell)}/k(t)$.
\end{proposition}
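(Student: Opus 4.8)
The plan is to read this off immediately from the lemma just proved together with Proposition~\ref{prop:cft}, so the argument is essentially a counting one. The guiding observation is that inertness of $(t-a)$ would force a Frobenius element of order equal to the full degree $[\crr^{(2)}:k(t)] = |\chh(\bff_2)| = 8$, whereas the lemma caps every order in $\chh(\bff_2)$ at $4$.

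First I would check that $(t-a)$ is unramified in $\crr^{(2)}/k(t)$. Since $a \in \bff_p - \{0,1\}$, the prime $(t-a)$ is distinct from the three primes $\{\infty,0,1\}$, and the mod $\ell$ Heisenberg extension is unramified outside $\{\infty,0,1\}$ by the proposition of Section~\ref{sec:Heisenberg}. This is what licenses the use of Frobenius data: for an unramified prime in a Galois extension, the decomposition group at any prime $\fq$ above $(t-a)$ is cyclic, generated by $\frob_{(t-a),\fq}$, and the common residue degree $f$ equals the order of that element.

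Next I would translate ``inert'' into a numerical condition and invoke Proposition~\ref{prop:cft}(2). Being inert means there is a single prime above $(t-a)$ whose residue degree is the full $8$; by Proposition~\ref{prop:cft}(2) the number of primes over $(t-a)$ equals $\tfrac{1}{f}\,[\crr^{(2)}:k(t)] = \tfrac{8}{f}$, so inertness is exactly the statement $f = 8$. But the preceding lemma gives $f = \operatorname{ord}\!\left(\frob_{(t-a),\fq}\right) \le 4$, hence $\tfrac{8}{f} \ge 2$, and there are at least two primes over $(t-a)$. This contradicts inertness.

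There is no genuine obstacle here: the statement is a direct corollary of the order bound on $\chh(\bff_2)$. The only points demanding care are the verification that $(t-a)$ is unramified (so that Proposition~\ref{prop:cft}(2) applies and the residue degree coincides with the order of Frobenius) and the clean identification of inertness with the condition $f = [\crr^{(2)}:k(t)]$.
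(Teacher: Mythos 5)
Your proof is correct and follows the same route as the paper: bound the order of the Frobenius class by $4$ via the group-theoretic lemma, then apply Proposition~\ref{prop:cft}(2) to conclude there are at least $8/f \ge 2$ primes above $(t-a)$. The only difference is that you explicitly verify unramifiedness (which the paper leaves implicit), a worthwhile addition but not a different argument.
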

\begin{proof}
    Since the Frobenius conjugacy class that corresponds to $(t-a)$ has order of at most $4$ by Lemma~\ref{lem:l-is-two-order}, Proposition~\ref{prop:cft} implies that the number of prime ideals above $(t-a)$ is at least $2$.
\end{proof}

In view of the above proposition, $(t-a)$ splits into eight, four, or two primes. As we will show below, all of three cases occur. Now, we first consider the case where $\legendre{a}{p}=\legendre{1-a}{p}=1$.

\begin{proposition}\label{prop:l-is-two-total}
    If $\legendre{a}{p}=\legendre{1-a}{p}=1$, $(t-a)$ is totally decomposed in $\crr^{(2)}$ if and only if $A_2(a)=1$.
\end{proposition}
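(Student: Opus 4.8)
The plan is to compute the Frobenius class $\frob_{(t-a),\crr^{(2)}}$ explicitly and apply Proposition~\ref{prop:cft}(1). Recall $\zeta_2=-1$, so $\epsilon_2(t)=1+t^{1/2}$ and $\crr^{(2)}=\ckk^{(2)}(\epsilon_2(t)^{1/2})$. Under the isomorphism $\gal(\crr^{(2)}/k(t))\cong\chh(\bff_2)$, the subgroup $\gal(\crr^{(2)}/\ckk^{(2)})$ corresponds to the center of $\chh(\bff_2)$, equivalently its commutator subgroup, a group of order $2$; write $\sigma$ for its generator, so $\sigma$ fixes $t^{1/2}$ and $(1-t)^{1/2}$ while $\sigma(\epsilon_2(t)^{1/2})=-\epsilon_2(t)^{1/2}$. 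The task is then to decide whether $\frob_{(t-a),\crr^{(2)}}$ equals $1$ or $\sigma$.

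First I would descend to the abelian subextension $\ckk^{(2)}/k(t)$, whose Galois group is $(\Z/2)^2$. Since $(t-a)$ has residue field $\bff_p$, Proposition~\ref{prop:cft}(3) identifies $\frob_{(t-a),\ckk^{(2)}}$ with the element acting by the $p$-power map on residues: it multiplies $t^{1/2}$ by $a^{(p-1)/2}=\legendre{a}{p}$ and $(1-t)^{1/2}$ by $(1-a)^{(p-1)/2}=\legendre{1-a}{p}$, both equal to $1$ by hypothesis and Euler's criterion. Hence $(t-a)$ splits completely in $\ckk^{(2)}$, and by Proposition~\ref{prop:cft}(4) the class $\frob_{(t-a),\crr^{(2)}}$ maps to $1$ in $\gal(\ckk^{(2)}/k(t))$, i.e.\ lies in $\langle\sigma\rangle$. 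Being central, it is a single well-defined element of $\{1,\sigma\}$, the same for every prime above $(t-a)$.

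To evaluate it, I would fix a prime $\fq$ of $\ckk^{(2)}$ over $(t-a)$; it has residue field $\bff_p$, so $t^{1/2}\equiv\sqrt a$ and $\epsilon_2(t)=1+t^{1/2}\equiv 1+\sqrt a\pmod{\fq}$ for one of the two square roots $\sqrt a\in\bff_p$, a unit since $a\neq1$. As $\fq$ has residue degree one over $(t-a)$, the absolute Frobenius already lies in $\gal(\crr^{(2)}/\ckk^{(2)})$ and coincides with the Frobenius of the Kummer extension $\crr^{(2)}/\ckk^{(2)}$ at $\fq$; by Proposition~\ref{prop:cft}(3) this acts on $\epsilon_2(t)^{1/2}$ by the factor $\epsilon_2(t)^{(p-1)/2}\equiv(1+\sqrt a)^{(p-1)/2}\pmod{\fq}$. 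Thus $\frob_{(t-a),\crr^{(2)}}=1$ precisely when $(1+\sqrt a)^{(p-1)/2}=1$.

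The main point is to reconcile this sign with $A_2(a)$ and to check it is independent of the chosen square root, and this is exactly where the second hypothesis enters. From $(1+\sqrt a)(1-\sqrt a)=1-a$ together with $(1-a)^{(p-1)/2}=\legendre{1-a}{p}=1$ one gets $(1+\sqrt a)^{(p-1)/2}=(1-\sqrt a)^{(p-1)/2}$, so the two possible signs agree and their common value equals $A_2(a)=\tfrac12\big((1-\sqrt a)^{(p-1)/2}+(1+\sqrt a)^{(p-1)/2}\big)$. Therefore $\frob_{(t-a),\crr^{(2)}}=1$ if and only if $A_2(a)=1$, which by Proposition~\ref{prop:cft}(1) is equivalent to $(t-a)$ being totally decomposed in $\crr^{(2)}$. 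I expect this last reconciliation — using $\legendre{a}{p}=1$ to land the Frobenius in the center and $\legendre{1-a}{p}=1$ to collapse the power-residue sign into the symmetric, choice-free quantity $A_2(a)$ — to be the crux; the remaining ingredients, namely integrality of $\epsilon_2(t)^{1/2}$ and unramifiedness of $(t-a)$, are routine.
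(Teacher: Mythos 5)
Your proof is correct, but it takes a genuinely different route from the paper. The paper's proof is geometric: it identifies the non-singular projective model of $\crr^{(2)}$ as the conic $U^2+V^2=2W^2$ with the explicit covering map $(U:V:W)\mapsto(W^2-U^2:UV:W^2)$ onto the Fermat conic $\ccc^{(2)}$, represents the splitting of $(t-a)$ in $\ckk^{(2)}$ by the four rational points $(\pm x:\pm y:1)$, and reduces total decomposition to the solvability of $1-U^2=x$, $UV=y$ over $\bff_p$, i.e.\ to $\legendre{1-x}{p}=1$. Your argument instead stays entirely within class field theory: the hypothesis $\legendre{a}{p}=\legendre{1-a}{p}=1$ forces $\frob_{(t-a),\crr^{(2)}}$ into $\gal(\crr^{(2)}/\ckk^{(2)})$, the center of $\chh(\bff_2)$, and Kummer theory at a degree-one prime $\fq$ evaluates that central element as the sign $(1+\sqrt a)^{(p-1)/2}$. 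Both proofs then share the same closing step, using $\legendre{1-a}{p}=1$ to show $(1-\sqrt a)^{(p-1)/2}=(1+\sqrt a)^{(p-1)/2}$ and hence that the relevant sign is the choice-free quantity $A_2(a)$. What your approach buys: it avoids constructing and verifying the explicit plane model (whose existence the paper highlights as a feature special to $\ell=2$), and it is structurally the same strategy the paper itself deploys for $\ell\ge 3$ (reduce to the residue of $\epsilon_\ell$ and its power-residue symbol, cf.\ their Lemma~\ref{lem:6}), so it unifies the two cases; note, though, that for odd $\ell$ the paper needs the explicit integral basis to justify the analogous residue computation, whereas in your $\ell=2$ argument the integrality of $\epsilon_2(t)^{1/2}$ and unramifiedness are immediate. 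What the paper's approach buys is the concrete geometric picture of $\crr^{(2)}$ as a conic, which is of independent interest and motivates their remark that no such model is available for general $\ell$.
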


\begin{proof}
    In this case, we use the non-singular projective model $\cdd^{(2)}$ of $\crr^{(2)}$. This is given by the plane curve $$U^2+V^2=2W^2$$ in $\bpp^2$, and the covering map $\varphi:\cdd^{(2)}\to \ccc^{(2)}$ is given by $$(U:V:W)\mapsto (W^2-U^2:UV:W^2).$$ Here, recall that $\ccc^{(2)}$ is the Fermat plane curve $X^2+Y^2=Z^2$ in $\bpp^2$ that corresponds to $\ckk^{(2)}$. Since $\legendre{a}{p}=\legendre{1-a}{p}=1$, the splitting of $(t-a)$ in $\ckk^{(2)}$ can be represented by four closed points $(\pm x:\pm y:1)$ on $\ccc^{(2)}$ where $x,y\in \bff_p$ that satisfies $x^2=a, y^2=1-a$. 

Now, $(t-a)$ is totally decomposed in $\crr^{(2)}$ when the fiber over $(\pm x:\pm y:1)$ in $\varphi:\cdd^{(2)}\to \ccc^{(2)}$ is given by two closed points over $\bff_p$. It happens if there exists two $u,v\in \bff_p$ such that $$1-u^2=x, uv=y.$$ This holds if and only if $\legendre{1-x}{p}=1$. Note that $\legendre{1-x}{p}=\legendre{1+x}{p}$ since $\legendre{1-x}{p}\legendre{1+x}{p} = \legendre{1-x^2}{p}=\legendre{1-a}{p}=1$. Thus, we have $$\legendre{1-x}{p}=\frac{1}{2}\left(\legendre{1-x}{p}+\legendre{1+x}{p} \right)=\frac{1}{2}\left((1-x)^{\frac{p-1}{2}}+(1+x)^{\frac{p-1}{2}}\right).$$
     
Recall from \eqref{def:a-l} that $A_2(a)= \frac{1}{2} \left((1-\sqrt{a})^{\frac{p-1}{2}}+(1+\sqrt{a})^{\frac{p-1}{2}}\right)$. since $x^2=a$ in $\bff_p$ and $\frac{1}{2}\left((1-x)^{\frac{p-1}{2}}+(1+x)^{\frac{p-1}{2}}\right)$ only has even degree terms, $A_2(a)=\legendre{1-x}{p}$. Thus, if $\legendre{1-x}{p}=1$, the value of the above polynomial is $1$, and if $\legendre{1-x}{p}=-1$, the value of the above polynomial is $-1$ and we obtain our desired result.
\end{proof}

Now, we show the value of $A_2(a)$ determines other quadratic residue symbols.

\begin{proposition}\label{prop:l-is-two-A_2(a)}
Assume $a \not \in\{ 0, 1, \frac 1 2\}$. The values $\legendre{a}{p}, \legendre{1-a}{p}$ are determined by the value of $A_2(a)$. More precisely, 
$$\begin{cases}
    \legendre{a}{p}= \legendre{1-a}{p}=1 & \text{if }A_2(a)=1~\text{or}~ -1\\
    \legendre{a}{p}=1,~\legendre{1-a}{p}=-1 & \text{if }A_2(a)=0 \\
    \legendre{a}{p}=-1,~ \legendre{1-a}{p}=1 & \text{if }A_2(a)^2=\frac{1}{1-a}\\
    \legendre{a}{p}= \legendre{1-a}{p}=-1 & \text{if }A_2(a)^2=\frac{a}{1-a}

\end{cases}$$

\end{proposition}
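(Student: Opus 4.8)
The plan is to determine $A_2(a)$ directly from the pair $\left(\legendre{a}{p},\legendre{1-a}{p}\right)$ by an explicit computation in each of the four sign combinations, and then to observe that the four resulting values of $A_2(a)^2$ are pairwise distinct under the hypothesis $a\notin\{0,1,\tfrac12\}$; this distinctness is exactly what lets one invert the correspondence and read the symbols off from $A_2(a)$. Throughout I write $x=\sqrt a$, which lies in $\bff_p$ when $\legendre{a}{p}=1$ and in $\bff_{p^2}\setminus\bff_p$ when $\legendre{a}{p}=-1$, and I set $u=(1-x)^{(p-1)/2}$ and $v=(1+x)^{(p-1)/2}$, so that $A_2(a)=\tfrac12(u+v)$. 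In either case Euler's criterion gives the product $uv=(1-x^2)^{(p-1)/2}=(1-a)^{(p-1)/2}=\legendre{1-a}{p}$, since $1-a\in\bff_p^\times$.

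First I would treat the residue case $\legendre{a}{p}=1$, refining the computation already made in Proposition~\ref{prop:l-is-two-total}. Here $x\in\bff_p$, so Euler's criterion applies to each factor separately: $u=\legendre{1-x}{p}$ and $v=\legendre{1+x}{p}$, both $\pm1$, with $uv=\legendre{1-a}{p}$. If $\legendre{1-a}{p}=1$ then $u=v$, whence $A_2(a)=u=\pm1$; if $\legendre{1-a}{p}=-1$ then $u=-v$, whence $A_2(a)=0$. This settles the first two lines of the table.

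The heart of the argument is the nonresidue case $\legendre{a}{p}=-1$, where $x\in\bff_{p^2}$ and the Frobenius $\sigma\colon y\mapsto y^p$ satisfies $\sigma(x)=x\cdot(x^2)^{(p-1)/2}=a^{(p-1)/2}x=-x$. Consequently $\sigma(1-x)=1+x$, so $v=\sigma(u)$ and $u+v=\tr_{\bff_{p^2}/\bff_p}(u)\in\bff_p$, consistent with $A_2(a)$ being a polynomial in $a$. The new input is $u^2=(1-x)^{p-1}=\sigma(1-x)/(1-x)=(1+x)/(1-x)$ and likewise $v^2=(1-x)/(1+x)$, so that $u^2+v^2=\frac{(1+x)^2+(1-x)^2}{1-x^2}=\frac{2(1+a)}{1-a}$. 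Squaring $A_2(a)=\tfrac12(u+v)$ and inserting $uv=\legendre{1-a}{p}$ gives $A_2(a)^2=\tfrac12\bigl(\tfrac{1+a}{1-a}+\legendre{1-a}{p}\bigr)$, which collapses to $\frac{1}{1-a}$ when $\legendre{1-a}{p}=1$ and to $\frac{a}{1-a}$ when $\legendre{1-a}{p}=-1$, yielding the last two lines.

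Finally I would close the argument by checking that the four output values of $A_2(a)^2$, namely $1$, $0$, $\frac{1}{1-a}$ and $\frac{a}{1-a}$, are pairwise distinct in $\bff_p$: the only possible coincidences among them force $a\in\{0,1,\tfrac12\}$, which is precisely what the hypothesis excludes. Hence the value of $A_2(a)$ pins down which of the four cases holds and therefore determines both Legendre symbols. I expect the main obstacle to be the nonresidue computation, namely correctly tracking the Frobenius action on $x=\sqrt a$ inside $\bff_{p^2}$ and reducing $u^2+v^2$ to a rational function of $a$; by contrast the residue case and the final distinctness check are routine.
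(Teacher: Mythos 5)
Your proof is correct, and it establishes exactly the same case-by-case values as the paper, but by a genuinely different method. The paper's proof is recurrence-based: it sets $x_n=\frac{1}{2}\left((1-\sqrt{a})^n+(1+\sqrt{a})^n\right)$, derives the recursion $x_{n+1}=2x_n+(a-1)x_{n-1}$ and the doubling identity $x_{2n}=2x_n^2-(1-a)^n$, computes $x_p=1$ and $x_{p+1}=1+a^{(p+1)/2}$ by expanding $p$-th powers, solves backwards for $x_{p-1}$ in each case of $\legendre{a}{p}$, and then reads off $A_2(a)^2=x_{(p-1)/2}^2$ from the doubling identity. You instead compute directly with $u=(1-x)^{(p-1)/2}$ and $v=(1+x)^{(p-1)/2}$: Euler's criterion settles the residue case at once, and in the nonresidue case the Frobenius relation $\sigma(x)=-x$ on $\bff_{p^2}$ yields $u^2=\frac{1+x}{1-x}$ and $v^2=\frac{1-x}{1+x}$, hence $A_2(a)^2=\frac{1}{4}\left(u^2+v^2+2uv\right)$. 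The two computations are algebraically equivalent: your $\frac{1}{2}(u^2+v^2)$ is precisely the paper's $x_{p-1}$, and your product $uv=\legendre{1-a}{p}$ plays the role of the term $(1-a)^{(p-1)/2}$ in the doubling identity; what differs is that you obtain this quantity from the explicit Galois action on $\bff_{p^2}$ rather than by descending the recursion from $x_p$ and $x_{p+1}$. Your route is arguably more conceptual---it makes visible why $\legendre{a}{p}$ governs the answer (it decides whether Frobenius fixes or flips $\sqrt{a}$) and is in the same spirit as the Frobenius computation in the paper's proof of Theorem~\ref{thm:l-is-two}---while the paper's recursion argument is purely formal polynomial manipulation that never needs to discuss where $\sqrt{a}$ lives. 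Your closing step, checking that the four values $1$, $0$, $\frac{1}{1-a}$, $\frac{a}{1-a}$ of $A_2(a)^2$ are pairwise distinct when $a\notin\{0,1,\frac{1}{2}\}$ so that the correspondence can be inverted, coincides with the paper's and is indeed necessary for the ``determined by'' claim.
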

\begin{proof}
    Let $$x_n=\frac{1}{2}\left((1-\sqrt{a})^n+(1+\sqrt{a})^n\right).$$ Note that $A_2(a)=x_{\frac{p-1}{2}}$. Then, $x_n$ satisfies the recursive relation $x_{n+1}=2x_n+(a-1)x_{n-1}$ with initial value $x_0=1, x_1=1$. Also, from the definition it is straightforward to see that $x_{2n}=2x_n^2-(1-a)^n$ holds. Now, for $a\in \bff_p$, $$x_p=\frac{1}{2}\left((1-\sqrt{a})^p+(1+\sqrt{a})^p\right)=\frac{1}{2}\sum_{i=0}^p (1+(-1)^i)\binom{p}{i} (\sqrt{a})^i =1$$ where the first equality is the definition, the second equality is the binomial expansion, and the third equality holds because all but the first term vanishes. In the last equality, we rely on $p$ being odd when $\ell=2$. A similar argument shows that $$x_{p+1}=\frac{1}{2}((1-\sqrt{a})^{p+1}+(1+\sqrt{a})^{p+1})=1+a^{\frac{p+1}{2}}=1+a^{\frac{p-1}{2}}\cdot a=1+\legendre{a}{p}a .$$ 
    
    Now, consider the case $\legendre{a}{p}=1$. In this case, $x_{p+1}=1+a$ and we get $x_{p-1}=1$ from the recursive formula for $x_n$. Then, $$x_{\frac{p-1}{2}}^2=\frac{1}{2}\left(1+(1-a)^{\frac{p-1}{2}}\right)=\frac{1}{2}\left(1+\legendre{1-a}{p}\right).$$ Hence, if $\legendre{a}{p}=\legendre{1-a}{p}=1$, $x_{\frac{p-1}{2}}=A_2(a)$ is $1$ or $-1$ which is determined by $\legendre{1-\sqrt{a}}{p}$ as we argued above. If $\legendre{a}{p}=1 $ and $\legendre{1-a}{p}=-1$, then $x_{\frac{p-1}{2}}^2=0$, which is equivalent to $A_2(a)=0$. 

In the other case $\legendre{1-a}{p}=-1$, we have $x_{p+1}=1-a$. Again from the recursive formula, we get $x_{p-1}=\frac{a+1}{1-a}$. Then, $$x_{\frac{p-1}{2}}^2=\frac{1}{2}\left(\frac{a+1}{1-a}+(1-a)^{\frac{p-1}{2}}\right)=\frac{1}{2}\left( \frac{1+a}{1-a}+\legendre{1-a}{p} \right).$$ If $\legendre{1-a}{p}=1$, $x_{\frac{p-1}{2}}^2=\frac{1}{1-a}$ which is never $0,1$. If $\legendre{1-a}{p}=-1$, $x_{\frac{p-1}{2}}^2=\frac{a}{1-a}$ which is not $0,1,\frac{1}{1-a}$ except the case $a=\frac{1}{2}$. Combining these results, we proved the desired result. 
\end{proof}

Finally, we show our main result in the case $\ell=2$, Theorem~\ref{thm:l-is-two}.



\spliteven*
\begin{proof}
    We first calculate the conjugacy classes of $\chh(\bff_2)$. The identity matrix forms a conjugacy class of size one. Next, the unique non-trivial element in the center forms a conjugacy class of size one; $$\begin{pmatrix} 1&0&1\\ 0&1&0\\0&0&1\end{pmatrix}.$$ Next, non-central order two elements form two conjugacy classes; $$\Set{
    \begin{pmatrix} 1&0&0\\ 0&1&1\\0&0&1\end{pmatrix},
    \begin{pmatrix} 1&0&1\\ 0&1&1\\0&0&1\end{pmatrix}},
    \Set{\begin{pmatrix} 1&1&0\\ 0&1&0\\0&0&1\end{pmatrix},
    \begin{pmatrix} 1&1&1\\ 0&1&0\\0&0&1\end{pmatrix}}.$$
    Finally, the order 4 elements form a single conjugacy class
    $$\Set{
    \begin{pmatrix} 1&1&1\\ 0&1&1\\0&0&1\end{pmatrix},
    \begin{pmatrix} 1&1&0\\ 0&1&1\\0&0&1\end{pmatrix}}.$$
    
    From this calculation, we observe that for a representative $(a_{ij})\in \chh(\bff_2)$ of a conjugacy class, if $a_{12}=0,a_{23}=1$ or $a_{12}=1,a_{23}=0$ holds, $(a_{ij})$ has order 2. Otherwise, if $a_{12}=a_{23}=1$, $(a_{ij})$ has order 4. 

    Recall from Proposition~\ref{prop:Heisenberg} that, under the identification $$\gal(\crr^{(2)}/k(t)) \cong \chh(\bff_2),$$ the $(1,2)$- and $(2,3)$-entries are determined by the images of $\sqrt{t}$ and $\sqrt{1-t}$, respectively. More precisely, for $\sigma \in \gal(\crr^{(2)}/k(t))$, the $(1,2)$-entry of $\sigma$ is given by $$\frac{1}{2}\left(1-\frac{\sigma(\sqrt{t})}{\sqrt{t}}\right),$$ while the $(2,3)$-entry is $$\frac{1}{2}\left(1-\frac{\sigma(\sqrt{1-t})}{\sqrt{1-t}}\right).$$


    Now, we show that each $(1,2), (2,3)$-entry of the Frobenius conjugacy class corresponding to $(t-a)$ is determined by $\legendre{a}{p}$ and $\legendre{1-a}{p}$, respectively. First, restriction of $\frob_{(t-a),\crr^{(2)}}$ to $K_1=\bff_p(t)(\sqrt{t})$ is $\frob_{(t-a),K_1}$. Since $K_1/\bff_p(t)$ is abelian, its induced morphism to $\coo_{K_1}/(t-a)\to \coo_{K_1}/(t-a)$ is $x\mapsto x^p$. Hence, $\sqrt{t}$ maps to $ t^{\frac{p}{2}}=a^{\frac{p-1}{2}}\sqrt{t}$ which determines the value of $\frob_{(t-a),\crr^{(2)}}(\sqrt{t})$ as $a^{\frac{p-1}{2}}\sqrt{t}=\legendre{a}{p}\sqrt{t}$. Thus, the $(1,2)$-entry of $\frob_{(t-a),\crr^{(2)}}$ is $\frac{1}{2}(1-\legendre{a}{p})$. Similarly, the $(2,3)$-entry of $\frob_{(t-a),\crr^{(2)}}$ is $\frac{1}{2}(1-\legendre{1-a}{p})$. Combining these results with Proposition~\ref{prop:l-is-two-total} and Proposition~\ref{prop:l-is-two-A_2(a)}, we obtain the desired result.
\end{proof}

\section{$\ell\geq3$}\label{sec:l-is-odd}
In this section, we treat the case $\ell \ge 3$ by proving Theorem~\ref{thm:l-is-odd}.
In particular, $a$ will denote an element of $\bff_p-\{0,1\}$.
Put $\legendre{a}{p}_\ell=a^{\frac{p-1}{\ell}}$ in $\bff_p$.

\begin{lemma}\label{lem:l-is-odd-order}
    For $\ell\geq 3$, all elements of the mod $\ell$ Heisenberg group have order $\ell$ except the identity element.
\end{lemma}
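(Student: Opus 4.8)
The plan is to mimic the order computation from the $\ell=2$ lemma but to track the binomial coefficients carefully, since the relevant vanishing now hinges on $\ell$ being odd. First I would write an arbitrary element as $A = I + N$, where $I$ is the $3\times 3$ identity and
$$N = \begin{pmatrix} 0 & a & c \\ 0 & 0 & b \\ 0 & 0 & 0 \end{pmatrix}, \qquad a,b,c \in \bff_\ell,$$
is strictly upper triangular. A direct multiplication gives $N^2 = \begin{pmatrix} 0 & 0 & ab \\ 0 & 0 & 0 \\ 0 & 0 & 0\end{pmatrix}$ and $N^3 = 0$, so $N$ is nilpotent of index at most $3$.

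Since $I$ and $N$ commute, the binomial theorem applies, and using $N^3 = 0$ I would expand
$$A^\ell = (I+N)^\ell = I + \binom{\ell}{1} N + \binom{\ell}{2} N^2 = I + \ell N + \binom{\ell}{2}N^2.$$
Both correction terms then vanish in $\chh(\bff_\ell)$: the term $\ell N$ is zero because the entries lie in $\bff_\ell$, and for the quadratic term I would write $\binom{\ell}{2} = \frac{\ell(\ell-1)}{2}$. As $\ell$ is an \emph{odd} prime, $2$ is invertible modulo $\ell$ while $\ell$ divides $\ell(\ell-1)$, whence $\binom{\ell}{2}\equiv 0 \pmod{\ell}$. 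Therefore $A^\ell = I$, so every element has order dividing $\ell$; since $\ell$ is prime, every non-identity element has order exactly $\ell$.

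The only subtle point — and precisely where the hypothesis $\ell \geq 3$ is used — is the vanishing of $\binom{\ell}{2}$ modulo $\ell$. This is exactly what fails when $\ell = 2$, since there $\binom{2}{2} = 1 \neq 0$ in $\bff_2$, so $A^2 = I + N^2$ need not be the identity; this is the source of the order-$4$ elements isolated in the $\ell=2$ case. Beyond recording this parity observation there is no real obstacle, as the remaining computation is routine.
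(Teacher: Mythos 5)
Your proof is correct and follows essentially the same route as the paper's: write $A = I + N$ with $N$ strictly upper triangular (hence nilpotent of index $\le 3$), expand $(I+N)^\ell$ by the binomial theorem, and conclude $A^\ell = I$ from the vanishing of the binomial coefficients modulo $\ell$. The only cosmetic difference is where the hypothesis $\ell \ge 3$ enters: the paper kills all middle terms $\binom{\ell}{k}N^k$, $0<k<\ell$, at once via primality of $\ell$ and then uses $\ell \ge 3$ to get $N^\ell = 0$, whereas you truncate at $N^2$ first and use oddness of $\ell$ to kill $\binom{\ell}{2}$ --- the same computation with the key hypothesis invoked at a different step.
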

\begin{proof}
    An arbitrary $A\in \chh(\bff_\ell)$ can be written as $A=I+B$ where $I$ is the $3\times 3$ identity matrix and $B$ is a strict upper-triangular matrix. Then, $A^\ell=(I+B)^\ell=I+B^\ell=I$ since $\ell\geq 3$.
\end{proof}

\begin{proposition}\label{prop:l-is-odd-number}
    The number of prime ideals of $\coo_{\crr^{(\ell)}}$ lying above $(t-a)$ equals $\ell^3$ or $\ell^2$.
\end{proposition}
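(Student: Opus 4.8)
The plan is to reduce the count to the order of a single Frobenius element, synthesizing the two facts already in hand: the identification of the Galois group with $\chh(\bff_\ell)$ and the order constraint from Lemma~\ref{lem:l-is-odd-order}. First I would confirm that $(t-a)$ is unramified in $\crr^{(\ell)}/\bff_p(t)$. This holds because the extension is unramified outside $\{\infty, 0, 1\}$, and the hypothesis $a \in \bff_p - \{0,1\}$ places the finite prime $(t-a)$ away from all three exceptional points. The residue field $\bff_p[t]/(t-a) \cong \bff_p$ is finite, so the hypotheses of Proposition~\ref{prop:cft} are satisfied and the Frobenius conjugacy class $\frob_{(t-a),\crr^{(\ell)}} \subset \gal(\crr^{(\ell)}/\bff_p(t)) \cong \chh(\bff_\ell)$ is well-defined.

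Next I would apply Proposition~\ref{prop:cft}(2) directly. Letting $f$ denote the order of any representative of $\frob_{(t-a),\crr^{(\ell)}}$, the number of primes of $\coo_{\crr^{(\ell)}}$ above $(t-a)$ equals $\tfrac{1}{f}[\crr^{(\ell)} : \bff_p(t)] = \tfrac{\ell^3}{f}$, where I use $\#\chh(\bff_\ell) = \ell^3$. It then remains only to constrain the possible values of $f$.

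This is precisely where Lemma~\ref{lem:l-is-odd-order} does the work: for $\ell \geq 3$, every nonidentity element of $\chh(\bff_\ell)$ has order exactly $\ell$, while the identity has order $1$. Hence $f \in \{1, \ell\}$, and substituting into the formula yields a count of either $\ell^3$ (when $f=1$, the totally decomposed case) or $\ell^2$ (when $f=\ell$), which is exactly the assertion.

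I do not anticipate a genuine obstacle, as the argument is a short synthesis of the two preceding results; the only point demanding a moment's care is verifying unramifiedness, which guarantees that the Frobenius class is defined and that Proposition~\ref{prop:cft}(2) is applicable. Note that this proposition deliberately stops short of deciding \emph{which} of the two cases occurs—that finer dichotomy, governed by whether $\frob_{(t-a),\crr^{(\ell)}}$ is trivial, is exactly what Theorem~\ref{thm:l-is-odd} will resolve through the polynomial $A_\ell(a)$.
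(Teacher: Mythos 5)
Your proof is correct and follows essentially the same route as the paper: both reduce the count to the order $f$ of the Frobenius class via Proposition~\ref{prop:cft}(2), then invoke Lemma~\ref{lem:l-is-odd-order} to force $f \in \{1,\ell\}$, giving $\ell^3/f \in \{\ell^3, \ell^2\}$. Your version is slightly more careful than the paper's one-line argument in that it explicitly verifies unramifiedness of $(t-a)$, which the paper leaves implicit.
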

\begin{proof}
    Since the Frobenius conjugacy class that corresponds to $(t-a)$ has order of at most $\ell$ by Lemma~\ref{lem:l-is-odd-order}, Proposition~\ref{prop:cft} implies that the number of prime ideals above $(t-a)$ is at least $\ell^2$.
\end{proof}

 Using the proposition above, it is relatively simple to determine the splitting of $(t-a)$ in $\crr^{(\ell)}/k(t)$ in certain cases.

\begin{proposition}
    If $\legendre{a}{p}_\ell\neq 1$ or $\legendre{1-a}{p}_\ell\neq 1$, then $(t-a)$ splits into $\ell^2$ primes in $\crr^{(\ell)}$.
\end{proposition}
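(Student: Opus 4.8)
The plan is to reduce the statement to the abelian Kummer subextension $\ckk^{(\ell)}=k(t)(t^{1/\ell},(1-t)^{1/\ell})$, where the Frobenius can be written down explicitly. By Proposition~\ref{prop:l-is-odd-number} the number of primes above $(t-a)$ is either $\ell^3$ or $\ell^2$, and by Proposition~\ref{prop:cft}(1) the value $\ell^3$ occurs precisely when $\frob_{(t-a),\crr^{(\ell)}}$ is trivial. Hence it suffices to show that under the hypothesis $\legendre{a}{p}_\ell\neq 1$ or $\legendre{1-a}{p}_\ell\neq 1$ the Frobenius at $(t-a)$ is nontrivial; the count is then forced to be $\ell^2$.

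To detect this nontriviality I would project onto $\gal(\ckk^{(\ell)}/k(t))\cong\Z/\ell\Z\times\Z/\ell\Z$, which is generated by the $\alpha,\beta$ fixed in Section~\ref{sec:Heisenberg}. By Proposition~\ref{prop:cft}(4) the image of $\frob_{(t-a),\crr^{(\ell)}}$ in this quotient equals $\frob_{(t-a),\ckk^{(\ell)}}$, so it is enough to see that the latter is nontrivial. Since $\ckk^{(\ell)}/k(t)$ is abelian and the residue field at $(t-a)$ is $\bff_p$, Proposition~\ref{prop:cft}(3) identifies $\frob_{(t-a),\ckk^{(\ell)}}$ as the unique automorphism inducing $x\mapsto x^{p}$ on the residue ring. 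Reducing the integral generator $t^{1/\ell}$ modulo the prime gives
\[
t^{1/\ell}\ \longmapsto\ (t^{1/\ell})^{p}=t^{(p-1)/\ell}\,t^{1/\ell}\equiv a^{(p-1)/\ell}\,t^{1/\ell}=\legendre{a}{p}_{\ell}\,t^{1/\ell},
\]
and symmetrically $(1-t)^{1/\ell}\mapsto\legendre{1-a}{p}_{\ell}\,(1-t)^{1/\ell}$. Writing $\legendre{a}{p}_\ell=\zeta_\ell^{\,i}$ and $\legendre{1-a}{p}_\ell=\zeta_\ell^{\,j}$, this exhibits $\frob_{(t-a),\ckk^{(\ell)}}=\alpha^{i}\beta^{j}$, which is trivial exactly when $i\equiv j\equiv 0\pmod{\ell}$, i.e. exactly when both $\ell$-th power residue symbols equal $1$. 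Under our hypothesis at least one of them is not $1$, so the Frobenius is nontrivial, $(t-a)$ is not totally decomposed, and the splitting into $\ell^2$ primes follows.

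I do not anticipate a genuine obstacle here: the argument is formal once Proposition~\ref{prop:cft} is available. The single point requiring care is the identification of the scalar $a^{(p-1)/\ell}$ with the root of unity $\zeta_\ell^{\,i}$ by which $\alpha^{i}$ acts. This rests on the hypothesis $\ell\mid p-1$, which guarantees $\mu_\ell\subset\bff_p=k$ so that both quantities already live in $\bff_p$ and the congruence modulo the prime upgrades to an equality of $\ell$-th roots of unity; the residue characteristic $p\neq\ell$ ensures these roots of unity remain distinct, so the comparison is unambiguous.
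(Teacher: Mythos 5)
Your proposal is correct and follows essentially the same route as the paper: reduce, via Lemma~\ref{lem:l-is-odd-order} and Proposition~\ref{prop:l-is-odd-number}, to showing the Frobenius at $(t-a)$ is nontrivial, then detect this by restricting to an abelian Kummer subextension where Frobenius is the automorphism inducing $x\mapsto x^p$ on the residue ring. The only (cosmetic) difference is that you project once onto $\gal(\ckk^{(\ell)}/k(t))$ and read off $\frob_{(t-a),\ckk^{(\ell)}}=\alpha^{i}\beta^{j}$, whereas the paper treats the two degree-$\ell$ subextensions $k(t)(t^{1/\ell})$ and $k(t)((1-t)^{1/\ell})$ separately in a short case analysis.
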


\begin{proof}
    First, suppose $\legendre{a}{p}_\ell\neq 1$ holds. From Lemma~\ref{lem:l-is-odd-order} and Proposition~\ref{prop:l-is-odd-number}, it is enough to show that the Frobenius conjugacy class that corresponds to $(t-a)$ is not the identity matrix.
    
    Now suppose that the conjugacy class of Frobenius element $\frob_{(t-a),\crr^{(\ell)}}$ is the identity element. Denote the restriction of $\frob_{(t-a),\crr^{(\ell)}}$ to $L=k(t)(t^{1/\ell})$ by $\sigma$. Then, $\sigma$ is the identity element of $\gal(L/k(t))$ from our assumption. This contradicts with $\legendre{a}{p}_\ell=1$. Indeed, its induced map on $\coo_L/(t-a)\to \coo_{L}/(t-a)$ should be the identity map. On the other hand, $\coo_L\cong k[t^{1/\ell}]$ implies $\coo_L/(t-a)\cong k[t]/(t^\ell-a)$. From the fact that $L/k(t)$ is an abelian extenion, $\frob_{(t-a),L}(t^{1/\ell})=t^{p/\ell}=a^{p-1/\ell}t^{1/\ell}=\legendre{a}{p}_\ell t^{1/\ell}$. Thus, if $\legendre{a}{p}_\ell\neq 1$, $\frob_{(t-a),\crr^{(\ell)}}$ cannot be the identity map. This completes the argument for the first case. 

    The other case $\legendre{1-a}{p}_\ell\neq 1$ can be treated by the similar argument by using $L'=k(t)((1-t)^{1/\ell})$ instead of $L$.
\end{proof}

  So, the remaining case is the case when $a,1-a$ is both $\ell$-th power residue in $\bff_p$, which is our main case. 




    

Now, we aim to prove the Theorem~\ref{thm:l-is-odd} which provides a criterion for determining the splitting of $(t-a)$ in $\crr^{(\ell)}/\bff_p(t)$. In order to prove this theorem, we find the ring of integers of $\crr^{(\ell)}$ in steps. This part of the proof is the most technical part. 

First, we show that the ring of integers of $\ckk^{(\ell)}$ is given by $k[t][t^{1/\ell}, (1-t)^{1/\ell}]$. Next, we calculate the relative discriminant $\Delta_{\crr^{(\ell)}/k(t)}$. Finally, we find the integral basis of $\crr^{(\ell)}/k(t)$. The basis we have found is almost Galois invariant in the sense that the line spanned by them are invariant under the Galois action. 

\begin{proposition}
    The ring of integers of $\ckk^{(\ell)}$ over $\bff_p(t)$ is $\bff_p[t][t^{1/\ell},(1-t)^{1/\ell}]$. 
\end{proposition}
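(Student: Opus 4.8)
The plan is to realize $\ckk^{(\ell)}$ as the compositum of the two pure cyclic extensions $L=\bff_p(t)(t^{1/\ell})$ and $L'=\bff_p(t)((1-t)^{1/\ell})$ of $K=\bff_p(t)$, and to assemble an integral basis out of theirs via Proposition~\ref{prop:compositum}. Write $\coo_K=\bff_p[t]$. By Proposition~\ref{prop:intclosure} the integral closure of $\coo_K$ in $L$ is $\bff_p[t^{1/\ell}]$, which is free over $\coo_K$ with basis $1,t^{1/\ell},\dots,t^{(\ell-1)/\ell}$; substituting $s=1-t$ (so that $\bff_p[t]=\bff_p[s]$) the same proposition gives that the integral closure in $L'$ is $\bff_p[(1-t)^{1/\ell}]$, free with basis $1,(1-t)^{1/\ell},\dots,(1-t)^{(\ell-1)/\ell}$. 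Thus both factors already come equipped with explicit integral bases, and it remains to check the hypotheses of Proposition~\ref{prop:compositum} and to compute discriminants.

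Second, I would compute the two discriminants. Since the displayed bases are power bases attached to the minimal polynomials $x^\ell-t$ and $x^\ell-(1-t)$, the relevant discriminants are $d=\mathrm{disc}(x^\ell-t)=\pm\,\ell^{\ell}t^{\ell-1}$ and $d'=\mathrm{disc}(x^\ell-(1-t))=\pm\,\ell^{\ell}(1-t)^{\ell-1}$. The hypothesis $\ell\mid p-1$ forces $\ell<p$, so $\ell$ is invertible in $\bff_p$ and $\ell^{\ell}$ is a unit of $\coo_K$; hence, up to units, $d=t^{\ell-1}$ and $d'=(1-t)^{\ell-1}$.

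Third, I would verify the remaining hypotheses. The elements $t$ and $1-t$ are coprime in $\bff_p[t]$, so their powers $t^{\ell-1}$ and $(1-t)^{\ell-1}$ are coprime as well, and an explicit B\'ezout relation $xd+x'd'=1$ with $x,x'\in\coo_K$ exists; this is exactly the coprimality demanded in Proposition~\ref{prop:compositum}. Moreover $L\cap L'=K$: both are degree-$\ell$ extensions of $K$, and their compositum $\ckk^{(\ell)}$ is known to have Galois group $\Z/\ell\Z\times\Z/\ell\Z$ and hence degree $\ell^2$, which rules out $L=L'$. Applying Proposition~\ref{prop:compositum} then shows that $\{t^{i/\ell}(1-t)^{j/\ell}\colon 0\le i,j\le \ell-1\}$ is an integral basis of $\ckk^{(\ell)}$ over $\coo_K$. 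Since these elements span precisely the $\coo_K$-module $\bff_p[t][t^{1/\ell},(1-t)^{1/\ell}]$, this identifies the ring of integers as claimed.

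The argument is essentially a clean assembly, so I do not expect a deep obstacle. The points demanding care are the discriminant computation, where one must confirm that the only prime divisors are $t$ and $1-t$ (this is precisely where $\ell$ being a unit, i.e. $\ell\neq p$, is used), and the exact form of the coprimality hypothesis in Proposition~\ref{prop:compositum}, which is the strong B\'ezout condition rather than mere coprimality of ideals. It is also worth recording that working with the affine ring $\bff_p[t]$ sidesteps any contribution from the place at infinity, so only the ramification at $t=0$ and $t=1$ intervenes, and these loci are disjoint, which is what makes the coprimality work.
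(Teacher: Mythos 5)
Your proposal is correct and follows essentially the same route as the paper: decompose $\ckk^{(\ell)}$ as the compositum of $L=\bff_p(t)(t^{1/\ell})$ and $L'=\bff_p(t)((1-t)^{1/\ell})$, get power integral bases from Proposition~\ref{prop:intclosure}, observe the discriminants are units times $t^{\ell-1}$ and $(1-t)^{\ell-1}$ (hence coprime), and apply Proposition~\ref{prop:compositum}. The only cosmetic differences are that you compute the discriminants via the formula for $\mathrm{disc}(x^\ell-c)$ and prove $L\cap L'=K$ by a degree argument, whereas the paper uses a Vandermonde-type computation and the Galois generators $\alpha,\beta$; both are sound.
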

\begin{proof}
    Let $L=\bff_p(t^{1/\ell})$ and $L'=\bff_p((1-t)^{1/\ell})$. We wish to use Proposition~\ref{prop:compositum} for $L, L'$. 
    
    First, we show that $L\cap L'=\bff_p(t)$. Consider both $L,L'$ as a subfield of $\ckk^{(\ell)}$. As we have defined in Section~\ref{sec:Heisenberg}, the Galois group $\gal(L/\bff_p(t))$ is generated by $\alpha$, and $\gal(L'/\bff_p(t))$ is generated by $\beta$. Since $\alpha,\beta$ generates the whole Galois group $\gal(\ckk^{(\ell)})$, we conclude that $L\cap L'=\bff_p(t)$.
    
    
    Next, by Proposition~\ref{prop:intclosure}, the integral bases of $L/\bff_p(t)$, $L'/\bff_p(t)$ are given by $\Set{t^{i/\ell}}_{0\leq i<\ell}$ and $\Set{(1-t)^{j/\ell}}_{0\leq j<\ell}$, respectively. Also, the discriminants of $L/\bff_p(t)$, $L'/\bff_p(t)$ is $Ct^{\ell-1}$ and $C(1-t)^{\ell-1}$ where $C=(\prod_{i\neq j}(\zeta_\ell^i-\zeta_\ell^j))\in \bff_p^\times$. Since they are relatively prime, we are ready to apply Proposition~\ref{prop:compositum}, which gives the desired result. 
\end{proof}

Next, we calculate the discriminant of $\crr^{(\ell)}$. We begin with a lemma. 

\begin{lemma}
    In $\bff_p[t][t^{1/\ell},(1-t)^{1/\ell}]$, the ideal $(t^{1/\ell}-\zeta_\ell^n)$ is equal to the $\ell$-th power of the prime ideal $(t^{1/\ell}-\zeta_\ell^n,(1-t)^{1/\ell})$.
\end{lemma}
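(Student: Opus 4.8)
The plan is to work in the ring $R := \bff_p[t][t^{1/\ell},(1-t)^{1/\ell}]$, which by the preceding proposition is the integral closure of the PID $\bff_p[t]$ in the finite separable extension $\ckk^{(\ell)}/\bff_p(t)$ (separable since $\ell \ne p$, as $\ell \mid p-1$), and is therefore a Dedekind domain. Writing $u = t^{1/\ell}$ and $v = (1-t)^{1/\ell}$, so that $u^\ell + v^\ell = 1$, I set $\fp := (u - \zeta_\ell^n, v)$. Because unique factorization of ideals holds in $R$, it suffices to establish three things: that $\fp$ is a nonzero prime, that $\fp$ is the only prime of $R$ containing $(u - \zeta_\ell^n)$, and that the $\fp$-adic valuation of $u - \zeta_\ell^n$ equals $\ell$.

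For primality I would compute $R/\fp$: imposing $u = \zeta_\ell^n$ and $v = 0$ makes the defining relation read $\zeta_\ell^{n\ell} + 0 = 1$, which holds, so $R/\fp \cong \bff_p$ and $\fp$ is maximal. The engine of the whole argument is the factorization $v^\ell = 1 - u^\ell = -\prod_{j=0}^{\ell-1}(u - \zeta_\ell^j) = -(u - \zeta_\ell^n)\,c$, where $c := \prod_{j \ne n}(u - \zeta_\ell^j)$. Reducing modulo $\fp$ gives $c \equiv \prod_{j \ne n}(\zeta_\ell^n - \zeta_\ell^j)$, which is a nonzero element of $\bff_p$ because the $\ell$-th roots of unity are distinct in $\bff_p$; hence $c \notin \fp$, so $c$ is a unit in the localization $R_\fp$. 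For the uniqueness statement, if a prime $\fq$ contains $u - \zeta_\ell^n$, then $u \equiv \zeta_\ell^n$ in the domain $R/\fq$ forces $v^\ell = -\prod_j(\zeta_\ell^n - \zeta_\ell^j) = 0$ (one factor vanishes), whence $v \in \fq$; thus $\fq \supseteq \fp$, and maximality gives $\fq = \fp$.

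Finally, to pin down the exponent I would pass to the DVR $R_\fp$, where the relation becomes $u - \zeta_\ell^n = -c^{-1} v^\ell$ with $c^{-1}$ a unit. Then $u - \zeta_\ell^n \in (v)$, so $\fp R_\fp = (u - \zeta_\ell^n, v)R_\fp = (v)R_\fp$, meaning $v$ is a uniformizer; consequently $\operatorname{ord}_\fp(u - \zeta_\ell^n) = \ell\,\operatorname{ord}_\fp(v) = \ell$, and together with the uniqueness of $\fp$ this yields $(u - \zeta_\ell^n) = \fp^\ell$. I expect the main obstacle to be organizing these observations so that the local computation at $\fp$ rigorously controls the global ideal; the substantive content is the identity $v^\ell = -(u-\zeta_\ell^n)\,c$ together with the verification that the complementary factor $c$ is a local unit, which is exactly what makes $v$, rather than $u - \zeta_\ell^n$, the uniformizer and produces the ramification index $\ell$.
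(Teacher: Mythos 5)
Your proof is correct, but it takes a genuinely different route from the paper's. The paper argues by double inclusion, entirely inside the ring: for $(t^{1/\ell}-\zeta_\ell^n)\supseteq I_n^\ell$ it expands a product of $\ell$ generic elements of $I_n=(t^{1/\ell}-\zeta_\ell^n,(1-t)^{1/\ell})$ and uses $1-t=\prod_{i=1}^{\ell}(1-\zeta_\ell^i t^{1/\ell})$ to absorb the single term not already divisible by $t^{1/\ell}-\zeta_\ell^n$; for the reverse inclusion it writes $t^{1/\ell}-\zeta_\ell^n=a(t^{1/\ell}-\zeta_\ell^n)^\ell+b(1-t)$ via Bezout's identity for $(x-\zeta_\ell^n)^\ell$ and $1-x^\ell$ in $\bff_p[x]$, noting that $1-t=((1-t)^{1/\ell})^\ell$ and $(t^{1/\ell}-\zeta_\ell^n)^\ell$ both lie in $I_n^\ell$. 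You instead invoke the Dedekind property of $R$ (legitimate here, since the preceding proposition identifies $R$ as the integral closure of $\bff_p[t]$ in the separable extension $\ckk^{(\ell)}$), show $\fp$ is the unique prime containing $(u-\zeta_\ell^n)$, and compute $\operatorname{ord}_\fp(u-\zeta_\ell^n)=\ell$ by exhibiting $v$ as a uniformizer of $R_\fp$. Both arguments pivot on the same identity $1-u^\ell=-\prod_{j}(u-\zeta_\ell^j)$; yours makes the structure transparent (the exponent $\ell$ is visibly a ramification index, with the complementary factor $c$ a local unit) at the cost of importing unique factorization of ideals and the localization formalism, while the paper's is heavier on explicit computation but needs nothing beyond elementary ideal manipulation. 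One shared tacit point you could make explicit: the claim $R/\fp\cong\bff_p$ (in particular $\fp\neq R$) rests on $R$ having the presentation $\bff_p[U,V]/(U^\ell+V^\ell-1)$, i.e., on the Fermat relation generating all relations between $u$ and $v$; the paper asserts the same isomorphism with the same degree of implicitness, so this is not a gap relative to the paper's own standard of rigor.
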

\begin{proof}
    To simplify the notation, put $\zeta=\zeta_\ell$ and $I_n=(t^{1/\ell}-\zeta^n,(1-t)^{1/\ell})$. First, we observe that $I_n$ is a prime ideal in $\coo_{\ckk^{(\ell)}}$. Indeed, $\bff_p[t][t^{1/\ell},(1-t)^{1/\ell}]/I_n\cong \bff_p$. 
    
    Next, we first show that $(t^{1/\ell}-\zeta^n)\supseteq I_n^\ell$ holds. We need to show $\prod_{i=1}^\ell(a_i (t^{1/\ell}-\zeta^n)+b_i(1-t)^{1/\ell})\in (t^{1/\ell}-\zeta^n)$ for all $a_i,b_i\in\bff_p[t][t^{1/\ell},(1-t)^{1/\ell}]$. After expanding the product, all the terms except $\prod_{i=1}^\ell b_i (1-t)^{1/\ell}=(\prod_{i=1}^\ell b_i)(1-t)$ is already divisible by $t^{1/\ell}-\zeta^n$. Also, since $(1-t)=\prod_{i=1}^\ell(\zeta^i-t^{1/\ell})$, the product $\prod_{i=1}^\ell(a_i (t^{1/\ell}-\zeta^n)+b_i(1-t)^{1/\ell})$ is an element of the ideal $(t^{1/\ell}-\zeta^n)$.

    Finally, we show that $(t^{1/\ell}-\zeta^n)\subseteq I_n^\ell$ holds. We only need to verify $t^{1/\ell}-\zeta^n\in I_n^\ell$. We claim that there exists $a,b\in \bff_p[t][t^{1/\ell},(1-t)^{1/\ell}]$ satisfying $t^{1/\ell}-\zeta^n=a(t^{1/\ell}-\zeta^n)^\ell+b(1-t)$. Substituting $x=t^{1/\ell}$, we observe that $(x-\zeta^n)^n$ and $(1-x^\ell)$ has greatest common divisor $x-\zeta^n$ in $\bff_p[x]$. Hence, by Bezout's identity, we get $a,b\in \bff_p[x]\subset \bff_p[t][t^{1/\ell},(1-t)^{1/\ell}]$ with $t^{1/\ell}-\zeta^n=a(t^{1/\ell}-\zeta^n)^\ell+b(1-t)$. 
\end{proof}

\begin{proposition}
    ${\crr^{(\ell)}/\ckk^{(\ell)}}$ is unramified.
\end{proposition}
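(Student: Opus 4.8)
The plan is to exploit that $\crr^{(\ell)} = \ckk^{(\ell)}(\epsilon_\ell(t)^{1/\ell})$ is a Kummer extension of degree $\ell$ of $\ckk^{(\ell)}$ (indeed $[\crr^{(\ell)}:\ckk^{(\ell)}] = \ell^3/\ell^2 = \ell$), whose ramification at a prime $\fp$ of $\coo_{\ckk^{(\ell)}}$ is governed entirely by the class of $v_\fp(\epsilon_\ell(t))$ modulo $\ell$. Since $\ell \mid p-1$ forces $\ell \neq p$, the prime $\ell$ is invertible in every residue field $\coo_{\ckk^{(\ell)}}/\fp$, so adjoining an $\ell$-th root of a local unit yields a separable (hence unramified) residue extension. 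Concretely I would record the standard Kummer criterion: $\crr^{(\ell)}/\ckk^{(\ell)}$ is unramified at $\fp$ if and only if $\ell \mid v_\fp(\epsilon_\ell(t))$. The proposition thereby reduces to verifying this divisibility at every prime of $\coo_{\ckk^{(\ell)}}$.

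First I would locate the primes at which $\epsilon_\ell(t)$ fails to be a unit. Writing $u = t^{1/\ell}$, the element $\epsilon_\ell(t) = \prod_{i=1}^{\ell-1}(1 - \zeta_\ell^i u)^i$ lies in $\bff_p[u] \subset \coo_{\ckk^{(\ell)}}$ and so has no poles at any prime; its only zeros occur at primes containing some factor $1 - \zeta_\ell^i u = -\zeta_\ell^i(u - \zeta_\ell^{-i})$. By the preceding lemma $(u - \zeta_\ell^{n}) = I_n^\ell$ with $I_n = (u - \zeta_\ell^n,\,(1-t)^{1/\ell})$ prime and maximal, so any prime containing $u - \zeta_\ell^{-i}$ must equal $I_{-i}$. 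Consequently $v_\fp(\epsilon_\ell(t)) = 0$ for every prime $\fp \notin \{I_0, \dots, I_{\ell-1}\}$, and such $\fp$ are automatically unramified.

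It then remains to compute $v_{I_n}(\epsilon_\ell(t))$. At $I_n$ the lemma gives $v_{I_n}(u - \zeta_\ell^n) = \ell$, while each factor $1 - \zeta_\ell^i u$ with $i \not\equiv -n \pmod{\ell}$ specializes to the nonzero constant $1 - \zeta_\ell^{i+n}$ and is hence a unit at $I_n$. Therefore only the single factor indexed by $i_0 \in \{1, \dots, \ell-1\}$ with $i_0 \equiv -n \pmod{\ell}$ contributes; since that factor is a unit multiple of $u - \zeta_\ell^n$, one obtains $v_{I_n}(\epsilon_\ell(t)) = i_0 \cdot \ell$, with the convention that there is no such $i_0$ (and the valuation is $0$) when $n \equiv 0 \pmod \ell$. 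In every case the result is divisible by $\ell$, which completes the verification and hence the proof.

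The main obstacle is conceptual bookkeeping rather than a hard estimate. One must (i) justify the Kummer ramification criterion in this residue-characteristic-$\neq\ell$ setting and confirm that $\epsilon_\ell(t)^{1/\ell}$ genuinely generates a degree-$\ell$ extension, so that $\epsilon_\ell(t)$ is not globally an $\ell$-th power; and (ii) use the lemma to pin down that the zero locus of $\epsilon_\ell(t)$ is exactly $\{I_n\}$ and that the order of vanishing there is a multiple of $\ell$. The slightly delicate point is tracking \emph{which} single factor of the product vanishes at a given $I_n$ and matching its exponent $i_0$ against the valuation $\ell$ supplied by the lemma; once this matching is made, divisibility by $\ell$ is immediate.
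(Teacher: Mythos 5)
Your proof is correct, and it takes a genuinely different route from the paper. The paper never localizes: it exhibits, for each $n$, the integral basis $\{\epsilon_{\ell,n}(t)^{i/\ell}\}_{0\le i<\ell}$ of $\crr^{(\ell)}/\ckk^{(\ell)}$, computes the discriminant ideal of each such basis (supported, via the lemma $(t^{1/\ell}-\zeta_\ell^n)=I_n^\ell$, on the primes $I_m$ with $m\neq n$), and concludes that the relative discriminant divides the greatest common divisor of these ideals, which is the unit ideal. You instead use only the single generator $\epsilon_\ell(t)=\epsilon_{\ell,0}(t)$, invoke the tame Kummer criterion (unramified at $\fp$ iff $\ell\mid v_\fp(\epsilon_\ell(t))$, valid since the residue characteristic $p$ differs from $\ell$), and verify the divisibility prime by prime: away from the $I_n$ the element is a unit, and $v_{I_n}(\epsilon_\ell(t))=i_0\ell$ with $i_0\equiv -n \pmod{\ell}$, again by the same lemma. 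Both arguments therefore hinge on that lemma, but yours is more local and more standard, replacing the coprimality-of-discriminants trick by a valuation count; the one point you rightly flag as needing justification is the Kummer criterion itself (write $\epsilon_\ell(t)=u\pi^{\ell m}$ locally and note $X^\ell-u$ has unit discriminant $\pm\ell^\ell u^{\ell-1}$). What the paper's longer route buys is reuse: the elements $\epsilon_{\ell,n}(t)^{1/\ell}$ and their Galois behaviour, introduced in its proof, are exactly the ingredients of the subsequent explicit integral basis $\alpha_{i,j}^k$ and of the polynomial $A_\ell$, so the discriminant computation doubles as preparation for the rest of Section~\ref{sec:l-is-odd}, whereas your argument, while cleaner for this proposition in isolation, would still leave that machinery to be set up afterwards.
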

\begin{proof}
    Let $\epsilon_{\ell,n}(t)=\prod_{i=1}^{\ell-1}(1-\zeta_\ell^{i+n}t^{1/\ell})^i$. Since $$\epsilon_{\ell,n}(t)=\epsilon_\ell(t)\times \frac{(1-\zeta_\ell^0 t^{1/\ell})^\ell\cdots(1-\zeta_\ell^{n-1} t^{1/\ell})^\ell}{(1-t)^n},$$ $\epsilon_{\ell,n}(t)^{1/\ell}\in \crr^{(\ell)}$. Also, since its $\ell$-th power is an element of $\coo_{\ckk^{(\ell)}}$, $\epsilon_{\ell,n}(t)^{1/\ell}\in \coo_{\crr^{(\ell)}}$. Moreover, $\Set{\epsilon_{\ell,n}(t)^{i/\ell}}_{0\leq i<\ell}$ is a basis of $\crr^{(\ell)}/\ckk^{(\ell)}$ for all $n$. Also, let $\delta\in \gal(\crr^{(\ell)}/\ckk^{(\ell)})$ as $\delta(\epsilon_\ell(t)^{1/\ell})=\zeta_\ell \epsilon_\ell(t)^{1/\ell}$ which is the generator of $\gal(\crr^{(\ell)}/\ckk^{(\ell)})$. Then, $\delta(\epsilon_{\ell,n}(t)^{1/\ell})=\zeta_\ell \epsilon_{\ell,n}(t)^{1/\ell}$. 

    Now, the discriminant includes the ideal generated by the square of determinants of matrices of the form $(\zeta_\ell^i \epsilon_{\ell,n}(t)^{j/\ell})_{ij}$ for $0\leq n<\ell$. The determinants are given by $\left(\prod_{i< j}(\zeta_\ell^i-\zeta_\ell^j)\right)\epsilon_{\ell,n}(t)^{(\ell-1)/2}$. Also, note that $(\epsilon_{\ell,n}(t))=\prod_{i=1}^{\ell-1}(1-\zeta_\ell^{i+n}t^{1/\ell})^i=\prod_{i=1}^{\ell-1}(1-\zeta_\ell^{i+n}t^{1/\ell},(1-t)^{1/\ell})^{i\ell}$ as an ideal by the lemma above. Thus, greatest common divisor of these ideals for $0\leq n\leq\ell-1$ is the unit ideal, which shows that the relative discriminant is the unit ideal. 
\end{proof}

\begin{corollary}
    The discriminant of $\crr^{(\ell)}$ is $(t(1-t))^{\ell^2(\ell-1)}$.
\end{corollary}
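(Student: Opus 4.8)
The plan is to read off the discriminant from the tower of extensions $\bff_p(t)\subset\ckk^{(\ell)}\subset\crr^{(\ell)}$ by invoking the chain rule (tower formula) for discriminants of extensions of Dedekind domains. This reduces the claim to two ingredients already in hand: the discriminant of the bottom layer $\ckk^{(\ell)}/\bff_p(t)$, which is implicit in the proof that $\coo_{\ckk^{(\ell)}}=\bff_p[t][t^{1/\ell},(1-t)^{1/\ell}]$, and the unramifiedness of the top layer $\crr^{(\ell)}/\ckk^{(\ell)}$ established in the preceding proposition. The formula is legitimate here because $\ell\neq p$ (as $\ell\mid p-1$), so every layer is separable.

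First I would record $\Delta_{\ckk^{(\ell)}/\bff_p(t)}$. From the proof identifying $\coo_{\ckk^{(\ell)}}$, the integral bases of $L=\bff_p(t^{1/\ell})$ and $L'=\bff_p((1-t)^{1/\ell})$ have discriminants $Ct^{\ell-1}$ and $C(1-t)^{\ell-1}$ with $C\in\bff_p^\times$. Applying Proposition~\ref{prop:compositum} with $n=n'=\ell$ gives
\[
\Delta_{\ckk^{(\ell)}/\bff_p(t)}=(Ct^{\ell-1})^{\ell}\,(C(1-t)^{\ell-1})^{\ell}=C^{2\ell}\,(t(1-t))^{\ell(\ell-1)},
\]
which, as an ideal of $\bff_p[t]$, is simply $(t(1-t))^{\ell(\ell-1)}$ since $C$ is a unit. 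I would then apply the tower formula
\[
\Delta_{\crr^{(\ell)}/\bff_p(t)}=\Delta_{\ckk^{(\ell)}/\bff_p(t)}^{[\crr^{(\ell)}:\ckk^{(\ell)}]}\cdot N_{\ckk^{(\ell)}/\bff_p(t)}\!\left(\Delta_{\crr^{(\ell)}/\ckk^{(\ell)}}\right).
\]
Because $\crr^{(\ell)}/\ckk^{(\ell)}$ is unramified, $\Delta_{\crr^{(\ell)}/\ckk^{(\ell)}}=(1)$, so the norm term is trivial; and $[\crr^{(\ell)}:\ckk^{(\ell)}]=\ell$ since $\gal(\crr^{(\ell)}/\ckk^{(\ell)})=\langle\delta\rangle$ has order $\ell$. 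Raising $(t(1-t))^{\ell(\ell-1)}$ to the $\ell$-th power then yields $(t(1-t))^{\ell^2(\ell-1)}$, as claimed.

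I do not expect a genuine obstacle: once the tower formula is invoked the argument is pure bookkeeping. The only points deserving care are that the discriminant is well defined only up to units of $\bff_p[t]$, so the constant $C^{2\ell}$ legitimately disappears, and that the unramified hypothesis really does force the norm factor to be the unit ideal. Both are immediate, so the corollary follows essentially formally from the two results proved just above it.
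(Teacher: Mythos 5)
Your proposal is correct and matches the paper's own argument: both compute $\Delta_{\ckk^{(\ell)}/\bff_p(t)}=(t(1-t))^{\ell(\ell-1)}$ via Proposition~\ref{prop:compositum} and then apply the discriminant tower formula, where the unramifiedness of $\crr^{(\ell)}/\ckk^{(\ell)}$ kills the norm factor and the exponent $[\crr^{(\ell)}:\ckk^{(\ell)}]=\ell$ produces $(t(1-t))^{\ell^2(\ell-1)}$. Your version merely spells out the bookkeeping (the unit $C^{2\ell}$ and the degree $\ell$) that the paper leaves implicit.
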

\begin{proof}
    Denote by $\Delta_{L/K}$ the discriminant of field extension $L/K$. By Proposition~\ref{prop:compositum}, we can see that the discriminant of $\ckk^{(\ell)}$ is given by $(t(1-t))^{\ell(\ell-1)}$. Then,  $$\Delta_{\crr^{(\ell)}/\bff_p(t)}=\cnn_{\ckk^{(\ell)}/\bff_p(t)}(\Delta_{\crr^{(\ell)}/\ckk^{(\ell)}})\Delta_{\ckk^{(\ell)}/\bff_p(t)}^\ell=(t(1-t))^{\ell^2(\ell-1)}.
    $$
\end{proof}

Finally, we give an explicit integral basis of $\crr^{(\ell)}/\bff_p(t)$. We begin with a lemma.

\begin{lemma}\label{lem:matrix}
    Let $A_1,\dots,A_\ell$ be $n\times n$ matrices. Put $\ell n\times \ell n$ matrix 
    \[
    A=\begin{pmatrix}
        A_1 & A_2 & \cdots & A_\ell\\
        A_1& \zeta_\ell A_2 & \cdots & \zeta_\ell^{\ell-1}A_\ell\\
        \vdots & \vdots & \ddots & \vdots \\
        A_1 & \zeta_\ell^{\ell-1} A_2 &\cdots &\zeta_\ell^{(l-1)^2}A_\ell
    \end{pmatrix}
    \]
    Then, $\det (A)=\prod_{i=1}^\ell\det(A_i)\times \left(\prod_{i\neq j}(\zeta_\ell^i-\zeta_\ell^j)\right)^{n}$.
\end{lemma}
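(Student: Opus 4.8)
The plan is to recognize that $A$ has the shape of a Kronecker product and to reduce the computation to the multiplicativity of the determinant. Indexing the block rows and columns by $r,c\in\{0,1,\dots,\ell-1\}$, the $(r,c)$ block of $A$ is $\zeta_\ell^{rc}A_{c+1}$. Let $V$ be the $\ell\times\ell$ Vandermonde matrix with $V_{rc}=\zeta_\ell^{rc}=(\zeta_\ell^{c})^{r}$, i.e. the Vandermonde matrix on the nodes $\zeta_\ell^{0},\zeta_\ell^{1},\dots,\zeta_\ell^{\ell-1}$, and let $D=\operatorname{diag}(A_1,\dots,A_\ell)$ be the block-diagonal $\ell n\times \ell n$ matrix. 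The key observation, which I would justify by a one-line block computation, is the factorization
\[
A=(V\otimes I_n)\,D,
\]
because the $(r,c)$ block of $(V\otimes I_n)D$ is $V_{rc}I_n\cdot A_{c+1}=\zeta_\ell^{rc}A_{c+1}$, which is exactly the $(r,c)$ block of $A$.

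Granting the factorization, the determinant splits as $\det A=\det(V\otimes I_n)\cdot\det D$. The block-diagonal factor contributes $\det D=\prod_{i=1}^{\ell}\det(A_i)$. For the Kronecker factor I would invoke the standard identity $\det(M\otimes I_n)=(\det M)^{n}$ valid for any $\ell\times\ell$ matrix $M$, so that $\det(V\otimes I_n)=(\det V)^{n}$. It then remains to evaluate the Vandermonde determinant $\det V=\prod_{0\le i<j\le \ell-1}(\zeta_\ell^{j}-\zeta_\ell^{i})$ and to raise it to the $n$-th power, which yields the constant in the statement.

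The argument is structural, so there is no deep obstacle; the two points that require genuine care are the following. First, one must confirm the block factorization $A=(V\otimes I_n)D$ with the powers of $\zeta_\ell$ indexed correctly, which is where all the content sits. Second, one must pin down the constant precisely: tracking the exponent produced by $\det(V\otimes I_n)$ and matching $(\det V)^{n}$ with the product displayed in the statement, being attentive to whether that product ranges over ordered or unordered pairs, since the Vandermonde determinant is $\prod_{i<j}(\zeta_\ell^{j}-\zeta_\ell^{i})$ whereas $\prod_{i\neq j}(\zeta_\ell^{i}-\zeta_\ell^{j})$ equals, up to sign, its square. If one wished to avoid the Kronecker determinant formula, the same result follows by iterated Laplace expansion along the block columns, or by scaling each block column to reduce $A$ to a matrix whose determinant is read off directly; but the tensor factorization is the cleanest route and is the one I would present.
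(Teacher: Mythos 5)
Your proposal is correct and takes essentially the paper's approach: the paper row-reduces the Vandermonde matrix $B$ to diagonal form and applies the same operations blockwise to $A$, which is precisely an elementary implementation of your factorization $A=(V\otimes I_n)D$, giving $\det A=(\det V)^n\prod_i\det A_i$. Your caution about the constant is warranted --- the Vandermonde determinant is $\prod_{i<j}(\zeta_\ell^{j}-\zeta_\ell^{i})$, whereas the stated $\prod_{i\neq j}(\zeta_\ell^{i}-\zeta_\ell^{j})$ is, up to sign, its square (a slip present in the paper's own proof, which asserts $\det B=\prod_{i\neq j}(\zeta_\ell^{i}-\zeta_\ell^{j})$), but this is harmless downstream since the lemma is only ever invoked up to a unit in $\bff_p^\times$.
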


\begin{proof}
    Let $$B=\begin{pmatrix}
        1 & 1 & \cdots & 1\\
        1& \zeta_\ell  & \cdots & \zeta_\ell^{\ell-1}\\
        \vdots & \vdots & \ddots & \vdots \\
        1 & \zeta_\ell^{\ell-1} &\cdots &\zeta_\ell^{(\ell-1)^2}
    \end{pmatrix}.$$ 
    From the formula for determinant of the Vandermonde matrix, $\det B=\left(\prod_{i\neq j}(\zeta_\ell^i-\zeta_\ell^j)\right)$. Now, consider the series of row operations $\Set{R_i}$ that transforms $B$ into diagonal matrix $B'$. Then, the product of diagonal elements is given by $\det B$. Now, apply the same row action on $A$ considering it as a $\ell\times \ell$ matrix with $n\times n$ block matrix as its entry. Then, we get the block diagonal matrix with diagonal entries $B'_{ii}A_i$. Hence, $\det A=\prod_{i=1}^\ell \det (B_{ii}'A_i)=(\det B)^n\times\prod_{i=1}^\ell\det(A_i)$
\end{proof}

\begin{theorem}
    An integral basis of $\crr^{(\ell)}/\bff_p(t)$ is given by $\alpha_{i,j}^0=t^{i/\ell}(1-t)^{j/\ell}$ where $0\leq i,j<\ell$ and $\alpha_{i,j}^k=t^{i/\ell}\left( \prod_{n=1}^{\ell-1}(1-\zeta_\ell^{n+j}t^{1/\ell})^{\{kn\}}\right)^{1/\ell}$ where $0\leq i,j<\ell$ and $0<k<\ell$ and $\{N\}=N-\ell[\frac{N}{\ell}]$.
\end{theorem}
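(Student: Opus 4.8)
The plan is to prove that the listed $\ell^3$ elements form an integral basis in two stages: first that each $\alpha_{i,j}^k$ lies in $\coo_{\crr^{(\ell)}}$, and then that the discriminant of this family equals the field discriminant $(t(1-t))^{\ell^2(\ell-1)}$ computed above. Since $\bff_p[t]$ is a PID, matching discriminants forces the index of the $\bff_p[t]$-module generated by the $\alpha_{i,j}^k$ inside $\coo_{\crr^{(\ell)}}$ to be a unit, hence equality. Integrality is the easy half. For $k=0$ it is clear. For $k>0$, writing $P_{j,k}(t)=\prod_{n=1}^{\ell-1}(1-\zeta_\ell^{n+j}t^{1/\ell})^{\{kn\}}$ for the quantity under the root, the decomposition $kn=\{kn\}+\ell\lfloor kn/\ell\rfloor$ gives $\epsilon_{\ell,j}(t)^{k}=P_{j,k}(t)\cdot Q_{j,k}^\ell$ with $Q_{j,k}=\prod_n(1-\zeta_\ell^{n+j}t^{1/\ell})^{\lfloor kn/\ell\rfloor}\in\coo_{\ckk^{(\ell)}}$. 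Hence $P_{j,k}(t)^{1/\ell}=(\epsilon_{\ell,j}(t)^{1/\ell})^k/Q_{j,k}$ lies in $\crr^{(\ell)}$, and as its $\ell$-th power $P_{j,k}(t)$ already lies in $\bff_p[t^{1/\ell}]\subset\coo_{\ckk^{(\ell)}}$, the element $\alpha_{i,j}^k=t^{i/\ell}P_{j,k}(t)^{1/\ell}$ is integral.

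For the discriminant I would compute $\det M$, where $M=(g(\alpha_{i,j}^k))$ has rows indexed by $g=\delta^c\tilde\beta^b\tilde\alpha^a$ and columns by $(i,j,k)$, with $\delta$ generating $\gal(\crr^{(\ell)}/\ckk^{(\ell)})$. The central element satisfies $\delta(\alpha_{i,j}^k)=\zeta_\ell^k\alpha_{i,j}^k$, so $g(\alpha_{i,j}^k)=\zeta_\ell^{ck}\cdot\tilde\beta^b\tilde\alpha^a(\alpha_{i,j}^k)$; grouping rows by $c$ and columns by $k$ puts $M$ in the block form of Lemma~\ref{lem:matrix} with blocks $A_k=(\tilde\beta^b\tilde\alpha^a(\alpha_{i,j}^k))_{(a,b),(i,j)}$ of size $\ell^2$, whence $\det M=(\text{unit})\prod_{k}\det A_k$. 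The action needed is read off from the generators: $\tilde\beta$ fixes $t^{1/\ell}$, $\tilde\alpha(t^{1/\ell})=\zeta_\ell t^{1/\ell}$ shifts $j\mapsto j+a$ in $P_{j,k}$, and from $\tilde\alpha(\epsilon_\ell^{1/\ell})=\tfrac{1-t^{1/\ell}}{(1-t)^{1/\ell}}\epsilon_\ell^{1/\ell}$ together with $\epsilon_{\ell,j}^{1/\ell}=\tfrac{R_j}{(1-t)^{j/\ell}}\,\epsilon_\ell^{1/\ell}$, where $R_j=\prod_{m=0}^{j-1}(1-\zeta_\ell^m t^{1/\ell})$, one gets $\tilde\beta(\epsilon_{\ell,j}^{1/\ell})=\zeta_\ell^{-j}\epsilon_{\ell,j}^{1/\ell}$. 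The block $A_0$ is a tensor product of two Vandermonde matrices scaled by $t^{i/\ell}(1-t)^{j/\ell}$, giving $\det A_0=(\text{unit})(t(1-t))^{\ell(\ell-1)/2}$.

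The substance is the blocks $A_k$ for $k>0$. Here I would set $\theta=\epsilon_\ell^{1/\ell}$ and use $P_{j,k}^{1/\ell}=\theta^k c_{j,k}$ with $c_{j,k}\in\ckk^{(\ell)}$, so that the factor $\tilde\beta^b\tilde\alpha^a(\theta)^k$ depends only on the row $(a,b)$ and pulls out of the determinant. Its product over all $(a,b)$ is an $\ell$-th root of the partial norm $\cnn_{\crr^{(\ell)}/\bff_p(t)}(\theta)$, which by $\delta(\theta)=\zeta_\ell\theta$ and the tower computation $\cnn_{\ckk^{(\ell)}/\bff_p(t)}(\epsilon_\ell)=(\prod_a\epsilon_{\ell,a})^{\ell}=(1-t)^{\ell^2(\ell-1)/2}$ evaluates to $(\text{unit})(1-t)^{k\ell(\ell-1)/2}$. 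The remaining determinant involves only $\ckk^{(\ell)}$-elements; since $\gcd(k,\ell)=1$ the phase $\zeta_\ell^{-bjk}$ is a nondegenerate character in $(b,j)$, so a second application of the Vandermonde factoring of Lemma~\ref{lem:matrix}, now in the $b$-index, reduces it to a product over $j$ of $\ell\times\ell$ evaluation determinants over $L=\bff_p(t^{1/\ell})$, each equal to a unit times $t^{(\ell-1)/2}\,\cnn_{L/\bff_p(t)}(R_j^k Q_{j,k}^{-1})$. Using $\cnn_{L/\bff_p(t)}(1-\zeta_\ell^{c}t^{1/\ell})=1-t$ together with the arithmetic identity $\sum_{n=1}^{\ell-1}\lfloor kn/\ell\rfloor=(k-1)(\ell-1)/2$ (which holds because $n\mapsto\{kn\}$ permutes $\{1,\dots,\ell-1\}$), all powers of $(1-t)$ collapse and one finds $\det A_k=(\text{unit})(t(1-t))^{\ell(\ell-1)/2}$, independent of $k$.

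Multiplying the blocks yields $\det M=(\text{unit})(t(1-t))^{\ell^2(\ell-1)/2}$, so the basis discriminant $(\det M)^2$ agrees with $\Delta_{\crr^{(\ell)}/\bff_p(t)}=(t(1-t))^{\ell^2(\ell-1)}$, finishing the argument (and incidentally confirming that the $\alpha_{i,j}^k$ are linearly independent, as $\det M\neq 0$). The main obstacle is exactly the $k>0$ computation: the shift $j\mapsto j+a$ induced by $\tilde\alpha$ prevents $A_k$ from being diagonal, and one must simultaneously track the reduced exponents $\{kn\}$, the auxiliary factors $Q_{j,k}$, and the partial norm of $\theta$, with the entire cancellation hinging on the identity for $\sum_n\lfloor kn/\ell\rfloor$. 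Throughout, one relies on $\zeta_\ell\in\bff_p$ to guarantee that every Vandermonde determinant and every root-of-unity phase is a unit in $\bff_p^\times$, so that each is absorbed harmlessly into the unit factor.
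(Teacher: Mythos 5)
Your overall strategy is exactly the paper's: integrality of each $\alpha_{i,j}^k$ via its $\ell$-th power lying in $\coo_{\ckk^{(\ell)}}$, then a discriminant comparison with $\Delta_{\crr^{(\ell)}/\bff_p(t)}=(t(1-t))^{\ell^2(\ell-1)}$ carried out by two applications of the block-Vandermonde lemma (Lemma~\ref{lem:matrix}), first in the central index against $k$ and then in the $\tilde\beta$-index against $j$, with the final collapse resting on the fact that $n\mapsto\{kn\}$ permutes $\{1,\dots,\ell-1\}$. The only structural difference is cosmetic: the paper keeps the row factors $\alpha_{0,j+x}^k\in\crr^{(\ell)}$ inside the innermost blocks and evaluates $\prod_{k}\prod_j(\alpha_{0,j}^k)^\ell=(1-t)^{\ell(\ell-1)^2/2}$ directly, whereas you split $\alpha_{i,j}^k=t^{i/\ell}\theta^k c_{j,k}$ and pull out the partial norm of $\theta=\epsilon_\ell(t)^{1/\ell}$ first, pushing the cancellation into norms from $L=\bff_p(t^{1/\ell})$.

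However, your execution of the $k>0$ blocks contains a concrete bookkeeping error, and with the intermediate values as you state them the computation does not close up. The factor $(1-t)^{-jk/\ell}$ inside $c_{j,k}=R_j^k(1-t)^{-jk/\ell}Q_{j,k}^{-1}$ contributes \emph{two} things: the phase $\zeta_\ell^{-bjk}$ under $\tilde\beta^b$ (which you use for the second Vandermonde step) \emph{and} a genuine factor $(1-t)^{-jk}$ to the row product $\prod_{a}\tilde\alpha^a(c_{j,k})$, since it is fixed by $\tilde\alpha$ and appears in all $\ell$ rows. You keep the phase but drop the factor: your stated inner determinant $(\text{unit})\,t^{(\ell-1)/2}\,\cnn_{L/\bff_p(t)}(R_j^kQ_{j,k}^{-1})$ equals $(\text{unit})\,t^{(\ell-1)/2}(1-t)^{jk-(k-1)(\ell-1)/2}$, and multiplying over $j$ and by your partial-norm factor $(1-t)^{k\ell(\ell-1)/2}$ yields $\det A_k=(\text{unit})\,(t(1-t))^{\ell(\ell-1)/2}(1-t)^{k\ell(\ell-1)/2}$, which is not $k$-independent; the resulting basis discriminant would then overshoot $\Delta_{\crr^{(\ell)}/\bff_p(t)}$ and the argument would fail. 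The repair is immediate: the correct inner determinant is $(\text{unit})\,t^{(\ell-1)/2}(1-t)^{-jk}\,\cnn_{L/\bff_p(t)}(R_j^kQ_{j,k}^{-1})=(\text{unit})\,t^{(\ell-1)/2}(1-t)^{-(k-1)(\ell-1)/2}$, whence $\det A_k=(1-t)^{k\ell(\ell-1)/2}\prod_j\det D_j=(\text{unit})\,(t(1-t))^{\ell(\ell-1)/2}$, independent of $k$, and the rest of your argument (and its agreement with the paper's computation) goes through.
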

Note that we have defined $$\epsilon_{\ell,n}(t)=\prod_{i=1}^{\ell-1}(1-\zeta_\ell^{i+n}t^{1/\ell})^i=\epsilon_\ell(t)\times \frac{(1-\zeta_\ell^0 t^{1/\ell})^\ell\cdots(1-\zeta_\ell^{n-1} t^{1/\ell})^\ell}{(1-t)^n}.$$ and $\alpha_{0,j}^{k}=\epsilon_{\ell,j}^{k/\ell}/(\prod_{n=1}^{\ell-1}(1-\zeta_{\ell}^{n+j}t^{1/\ell})^{[\frac{kn}{\ell}]})$.
\begin{proof}
    First, we can easily check that $\alpha_{i,j}^k\in \coo_{\crr^{(\ell)}}$ since its $\ell$-th power is included in $\coo_{\ckk^{(\ell)}}$. 
    
    Denote $\sigma_{xy}^z\in \gal(\crr^{(\ell)}/\bff_p(t))$ for $t^{1/\ell}\mapsto \zeta_\ell^x t^{1/\ell}$, $(1-t)^{1/\ell}\mapsto \zeta_\ell^y(1-t)^{1/\ell}$, $\epsilon_\ell(t)^{1/\ell}\mapsto \zeta_\ell^z \epsilon_{\ell,x}(t)^{1/\ell}$. Now we only need to show that the square of determinant of $D=(\sigma_{xy}^z(\alpha_{i,j}^{k}))$ in a certain ordering equals to $(t(1-t))^{\ell^2(\ell-1)}$ up to some unit $C\in \bff_p^\times$.

    Now, note that $\sigma_{xy}^z(\alpha_{ij}^0)=\zeta_\ell^{ix+jy}\alpha_{i,j}^0$ and $\sigma_{xy}^z(\alpha_{i,j}^k)=\zeta_\ell^{ix-jy+kz}\alpha_{i,j+x}^k$ for $k>0$. 

    Varying $k,z$, $D$ can be represented as 
\[
\bordermatrix{
      & k=0 & k=1 & \cdots & k=\ell-1 \cr
z=0 & A_0 & A_1 & \cdots & A_{\ell-1} \cr
z=1 & A_0 & \zeta_\ell A_1 & \cdots & \zeta_\ell^{\ell-1} A_{\ell-1} \cr
\vdots & \vdots & \vdots & \ddots & \vdots \cr
z=\ell-1 & A_0 & \zeta_\ell^{\ell-1} A_1 & \cdots & \zeta_\ell^{(\ell-1)^2} A_{\ell-1}
}.
\]
    By Lemma~\ref{lem:matrix}, $\det D$ can be expressed by product of $\det A_k$ and some unit $C\in \bff_p^{\times}$. Next, we calculate each $\det A_k$.

    First, $\det A_0$ is exactly the discriminant of $\ckk^{(\ell)}/\bff_p(t)$ which is $C\times(t(1-t))^{\frac{\ell(\ell-1)}{2}}$. Next, we calculate $\det A_k$ for $k>0$. 

    Now, varying $y,j$, we can express $A_k$ by 
\[
\bordermatrix{
      & j=0 & j=1 & \cdots & j=\ell-1 \cr
y=0 & B_{0,k} & B_{1,k} & \cdots & B_{\ell-1,k} \cr
y=1 & B_{0,k} & \zeta_\ell^{-k} B_{1,k} & \cdots & \zeta_\ell^{-k(\ell-1)} B_{\ell-1,k} \cr
\vdots & \vdots & \vdots & \ddots & \vdots \cr
y=\ell-1 & B_{0,k} & \zeta_\ell^{-k(\ell-1)} B_{1,k} & \cdots & \zeta_\ell^{-k(\ell-1)^2} B_{\ell-1,k}
}.
\] 
    Again using Lemma~\ref{lem:matrix}, $\det A_k$ can be expressed by product of $\det B_{j,k}$ and some unit $C\in \bff_p^{\times}$. Now, we explicitly write $B_{j,k}$ as

\[
\bordermatrix{
      & i=0 & i=1 & \cdots & i=\ell-1 \cr
x=0 & \alpha_{0,j}^k & t^{1/\ell}\alpha_{0,j}^k & \cdots & t^{\ell-1/\ell}\alpha_{0,j}^k \cr
x=1 & \alpha_{0,j+1}^k & \zeta_\ell t^{1/\ell}\alpha_{0,j+1}^k & \cdots & \zeta_\ell^{\ell-1} t^{\ell-1/\ell}\alpha_{0,j+1}^k \cr
\vdots & \vdots & \vdots & \ddots & \vdots \cr
x=\ell-1 & \alpha_{0,j+\ell-1}^k & \zeta_\ell^{\ell-1} t^{1/\ell}\alpha_{0,j+\ell-1}^k & \cdots & \zeta_\ell^{(\ell-1)^2} t^{\ell-1/\ell}\alpha_{0,j+\ell-1}^k
}.
\] 

Thus, $\det B_{j,k}$ can be given as $t^{\frac{\ell-1}{2}}\prod_{j=0}^{\ell-1} \alpha_{0,j}^k$ by Lemma~\ref{lem:matrix}. Adding up, $$\det A_k=C\cdot\prod_{j=0}^{\ell-1} t^{\frac{\ell-1}{2}}(\alpha_{0,j}^k)^{\ell}=Ct^{\frac{\ell(\ell-1)}{2}}\prod_{j=0}^{\ell-1} (\alpha_{0,j}^k)^\ell.$$ 

Thus, $$\det D=Ct^{\frac{\ell^2(\ell-1)}{2}}(t-1)^{\frac{\ell(\ell-1)}{2}}\prod_{k=1}^{\ell-1}\prod_{j=0}^{\ell-1} (\alpha_{0,j}^k)^\ell.$$

Finally, we only need to calculate $$\prod_{k=1}^{\ell-1}\prod_{j=0}^{\ell-1} (\alpha_{0,j}^k)^\ell=\prod_{j=0}^{\ell-1}\prod_{k=1}^{\ell-1} (\alpha_{0,j}^k)^\ell.$$

Recall that $\alpha_{0,j}^k=\left( \prod_{n=1}^{\ell-1}(1-\zeta_\ell^{n+j}t^{1/\ell})^{\{kn\}}\right)^{1/\ell}.$ For $n\in (\Z/\ell\Z)^\times$, the set $\Set{kn|k\in (\Z/\ell\Z)^\times}$ is exactly $(\Z/\ell\Z)^\times$. Hence, $$\prod_{k=1}^{\ell-1} \alpha_{0,j}^k=\prod_{n=1}^{\ell-1}\prod_{k=1}^{\ell-1} (1-\zeta_\ell^{n+j}t^{1/\ell})^{\{kn\}/\ell}=\prod_{n=1}^{\ell-1}(1-\zeta_\ell^{n+j}t^{1/\ell})^{\frac{\ell-1}{2}}.$$ Then, $$\prod_{j=0}^{\ell-1}\prod_{k=1}^{\ell-1} (\alpha_{0,j}^k)^\ell=\prod_{j=0}^{\ell-1}\prod_{n=1}^{\ell-1}(1-\zeta_\ell^{n+j}t^{1/\ell})^{\frac{\ell(\ell-1)}{2}}=(1-t)^{\frac{\ell(\ell-1)^2}{2}}.$$

So, we get $$(\det D)^2=C (t(t-1))^{\ell^2(\ell-1)}=\Delta_{\crr^{(\ell)}/\bff_p(t)}$$ implying that $\left\{\alpha_{i,j}^k\right\}$ is an integral basis. 
\end{proof}

Before we prove our main result, we show that $A_\ell(x)$ is indeed a polynomial. Recall that we have defined $A_\ell(x) = \frac{1}{\ell} \left(\sum_{j=0}^{\ell-1} \epsilon_{\ell,j}(x)^{\frac{p-1}{\ell}}\right)$ where $\epsilon_{\ell,n}(x)=\prod_{i=1}^{\ell-1}(1-\zeta_\ell^{i+n}x^{1/\ell})^i$. 
\begin{lemma}
    $A_\ell(x)$ is a polynomial in $x$.
\end{lemma}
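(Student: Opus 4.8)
The plan is to realize $A_\ell(x)$ as an element that is simultaneously invariant under the Kummer Galois action $x^{1/\ell}\mapsto \zeta_\ell x^{1/\ell}$ and integral over $\bff_p[x]$; integral closedness of $\bff_p[x]$ then forces $A_\ell(x)\in\bff_p[x]$.

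First I would set $u=x^{1/\ell}$ and let $\tau$ be the generator of $\gal(\bff_p(u)/\bff_p(x))$ determined by $\tau(u)=\zeta_\ell u$. This is legitimate because $\ell\mid p-1$ guarantees $\zeta_\ell\in\bff_p$, so the entire discussion takes place over $\bff_p$. The key computation is the effect of $\tau$ on the factors $\epsilon_{\ell,j}(x)=\prod_{i=1}^{\ell-1}(1-\zeta_\ell^{i+j}u)^i$. Applying $\tau$ replaces each $u$ by $\zeta_\ell u$, hence each factor $1-\zeta_\ell^{i+j}u$ by $1-\zeta_\ell^{i+j+1}u$, so that $\tau(\epsilon_{\ell,j}(x))=\epsilon_{\ell,j+1}(x)$, the index read modulo $\ell$; indeed $\epsilon_{\ell,\ell}(x)=\epsilon_{\ell,0}(x)$ because $\zeta_\ell^\ell=1$. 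Thus $\tau$ permutes the tuple $(\epsilon_{\ell,0},\dots,\epsilon_{\ell,\ell-1})$ by the cyclic shift $j\mapsto j+1$.

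Since $\tau$ is a field automorphism it commutes with the $\frac{p-1}{\ell}$-th power map, so it permutes the summands $\epsilon_{\ell,j}(x)^{\frac{p-1}{\ell}}$ cyclically and fixes their sum. Hence $A_\ell(x)$ is $\tau$-invariant and lies in the fixed field $\bff_p(u)^{\langle\tau\rangle}=\bff_p(x)$. To upgrade this to a polynomial, I would observe that each $\epsilon_{\ell,j}(x)$ is a polynomial in $u$, hence lies in $\bff_p[u]$, which is integral over $\bff_p[x]=\bff_p[u^\ell]$; therefore each $\epsilon_{\ell,j}(x)^{\frac{p-1}{\ell}}$ is integral over $\bff_p[x]$, and so is their $\bff_p$-linear combination $A_\ell(x)$, where $1/\ell\in\bff_p$ since $\ell\neq p$. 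An element of $\bff_p(x)$ integral over the integrally closed ring $\bff_p[x]$ lies in $\bff_p[x]$, which is precisely the claim.

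I do not expect a genuine obstacle here: the only point demanding care is the bookkeeping of indices modulo $\ell$ in the cyclic permutation, that is, correctly identifying the wrap-around $\epsilon_{\ell,\ell}=\epsilon_{\ell,0}$. Everything else is a direct application of Kummer theory together with integral closedness of the one-variable polynomial ring.
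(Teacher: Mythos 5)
Your proof is correct, and its heart coincides with the paper's: the observation that the substitution $x^{1/\ell}\mapsto\zeta_\ell x^{1/\ell}$ sends $\epsilon_{\ell,j}$ to $\epsilon_{\ell,j+1}$ (with the wrap-around $\epsilon_{\ell,\ell}=\epsilon_{\ell,0}$), hence permutes the summands cyclically and fixes $A_\ell(x)$. Where you diverge is the descent step. The paper notes that $A_\ell(x)$ is visibly a polynomial in $u=x^{1/\ell}$, and a polynomial in $u$ invariant under $u\mapsto\zeta_\ell u$ can only contain monomials $u^d$ with $\zeta_\ell^d=1$, i.e.\ $\ell\mid d$; so it is a polynomial in $u^\ell=x$ by direct coefficient comparison. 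You instead pass through Galois theory (invariance places $A_\ell(x)$ in the fixed field $\bff_p(x)$) and then upgrade from rational function to polynomial via integrality of $A_\ell(x)$ over the integrally closed ring $\bff_p[x]$. Both finishes are valid and short; the paper's is more elementary, needing no integral closure, while yours is the version that would survive in settings where the invariant element is not a priori polynomial in $u$ --- though here it is, since each $\epsilon_{\ell,j}\in\bff_p[u]$, which is also exactly what powers your integrality step. One small point of care in your write-up, which you handled correctly: the factor $1/\ell$ is harmless because $\ell\mid p-1$ forces $\ell\neq p$, so $\ell$ is invertible in $\bff_p$.
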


\begin{proof}
    Since $\epsilon_{\ell,n}(x)$ is a polynomial in $x^{1/\ell}$, $A_\ell(x)$ is also a polynomial in $x^{1/\ell}$. Substituting $\zeta_\ell x^{1/\ell}$ to $x^{1/\ell}$, $A_\ell(x)$ stays invariant. So, $A_\ell(x)$ only has degree of multiple of $\ell$ as a polynomial in $x^{1/\ell}$.
\end{proof}

\splitodd*

\begin{proof}
    Choose $x,y\in \bff_p$ such that $x^\ell=a,y^\ell=1-a$. Then, $x^\ell=a$ implies $t-a=\prod_{i=0}^{\ell-1}(t^{1/\ell}-\zeta_\ell^i x)$, which in turn implies that the prime ideal $(t^{1/\ell}-\zeta_\ell^ix,(1-t)^{1/\ell}-\zeta_\ell^j y)$ divides $(t-a)$ for all $0\leq i,j<\ell$. These $\ell^2$ primes are all distinct. Since $\ckk^{(\ell)}$ is a field extension of degree $\ell^2$, the factorization of $(t-a)$ is $(t^{1/\ell}-\zeta_\ell^ix,(1-t)^{1/\ell}-\zeta_\ell^j y)$ where $0\leq i,j<\ell$.

    Next, we need a lemma. Note that we can define $\epsilon_\ell(a)=\prod_{k=1}^{\ell-1}(1-\zeta_\ell^{k+i}x)^k$ by using $\zeta_\ell^ix$ as the preferred value of $a^{1/\ell}$.
    \begin{lemma}\label{lem:6}
        
        $(t^{1/\ell}-\zeta_\ell^ix,(1-t)^{1/\ell}-\zeta_\ell^j y)$ is totally decomposed in $\crr^{(\ell)}/\ckk^{(\ell)}$ if and only if $\epsilon_\ell(a)$ is an $\ell$-th power residue or equivalently $\epsilon_\ell(a)^{\frac{p-1}{\ell}}=1$.
    \end{lemma}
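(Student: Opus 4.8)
The plan is to exploit that the relative extension $\crr^{(\ell)}/\ckk^{(\ell)}$ is a cyclic Kummer extension of degree $\ell$ generated by $\epsilon_\ell(t)^{1/\ell}$, with Galois group generated by the element $\delta$ satisfying $\delta(\epsilon_\ell(t)^{1/\ell})=\zeta_\ell\,\epsilon_\ell(t)^{1/\ell}$. Write $\fpp_{i,j}=(t^{1/\ell}-\zeta_\ell^i x,(1-t)^{1/\ell}-\zeta_\ell^j y)$ for the prime in question. Since this extension is abelian and, by the proposition established above, unramified everywhere, $\fpp_{i,j}$ is unramified in $\crr^{(\ell)}$, and by Proposition~\ref{prop:cft}(1) it is totally decomposed precisely when its Frobenius $\frob_{\fpp_{i,j},\crr^{(\ell)}}$ is trivial. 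Thus the whole task reduces to computing this Frobenius.

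First I would record that $\fpp_{i,j}$ is a degree-one prime: the quotient map $\coo_{\ckk^{(\ell)}}\to\coo_{\ckk^{(\ell)}}/\fpp_{i,j}$ sends $t^{1/\ell}\mapsto\zeta_\ell^i x$ and $(1-t)^{1/\ell}\mapsto\zeta_\ell^j y$, both lying in $\bff_p$, so the residue field is $\bff_p$ and has $p$ elements. Because $\crr^{(\ell)}/\ckk^{(\ell)}$ is abelian, Proposition~\ref{prop:cft}(3) identifies $\frob_{\fpp_{i,j},\crr^{(\ell)}}$ as the unique power $\delta^m$ whose induced map on a residue field above $\fpp_{i,j}$ is the $p$-power map $z\mapsto z^p$.

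Next I would evaluate $\delta^m$ on the Kummer generator. Fixing a prime $\fqq$ of $\crr^{(\ell)}$ above $\fpp_{i,j}$, the defining property of the Frobenius gives $\delta^m(\epsilon_\ell(t)^{1/\ell})\equiv \epsilon_\ell(t)^{p/\ell}\pmod{\fqq}$, and since the left side equals $\zeta_\ell^m\,\epsilon_\ell(t)^{1/\ell}$, dividing by the unit $\epsilon_\ell(t)^{1/\ell}$ yields $\zeta_\ell^m\equiv\epsilon_\ell(t)^{(p-1)/\ell}\pmod{\fqq}$. Reducing $\epsilon_\ell(t)=\prod_{k=1}^{\ell-1}(1-\zeta_\ell^k t^{1/\ell})^k$ modulo $\fpp_{i,j}$ substitutes $t^{1/\ell}\mapsto\zeta_\ell^i x$ and produces exactly $\epsilon_\ell(a)=\prod_{k=1}^{\ell-1}(1-\zeta_\ell^{k+i}x)^k$. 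Hence $\zeta_\ell^m=\epsilon_\ell(a)^{(p-1)/\ell}$ as an equality in $\bff_p$, the right-hand side being an $\ell$-th root of unity because its $\ell$-th power is $\epsilon_\ell(a)^{p-1}=1$; here the hypothesis $\ell\mid p-1$ guarantees $\mu_\ell\subset\bff_p$, so both sides already lie in $\bff_p$ and the congruence mod $\fqq$ is a genuine equality that pins down $m$.

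Finally, the Frobenius is trivial exactly when $m\equiv 0$, that is when $\epsilon_\ell(a)^{(p-1)/\ell}=1$, which by Euler's criterion for $\ell$-th powers is the condition that $\epsilon_\ell(a)$ be an $\ell$-th power residue in $\bff_p$; this gives the stated equivalence. I expect the only delicate point to be the reduction step, where one must verify that $\epsilon_\ell(t)$ is a unit at $\fpp_{i,j}$ (equivalently $\epsilon_\ell(a)\neq 0$, which holds since $x^\ell=a\neq 1$ forces every factor $1-\zeta_\ell^{k+i}x$ to be nonzero) and that the residue field is honestly $\bff_p$ rather than a proper extension, so that the Frobenius is the actual $p$-power map; both checks are immediate here but are precisely what make the Euler-criterion computation valid.
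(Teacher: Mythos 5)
Your proof is correct, and it takes a genuinely different route from the paper's. The paper argues ring-theoretically: using the explicit integral basis $\{\alpha_{i,j}^k\}$ of $\coo_{\crr^{(\ell)}}$ constructed in the theorem immediately preceding the lemma, it exhibits an isomorphism
\[
\bff_p[t][\alpha_{i,j}^k]/(t^{1/\ell}-\zeta_\ell^i x,\,(1-t)^{1/\ell}-\zeta_\ell^j y)\;\cong\;\bff_p[\epsilon_\ell(t)^{1/\ell}]/(\epsilon_\ell(t)-\epsilon_\ell(a)),
\]
and then reads off the splitting of the prime from the factorization of $X^\ell-\epsilon_\ell(a)$ over $\bff_p$. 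You instead work entirely inside the relative cyclic Kummer extension $\crr^{(\ell)}/\ckk^{(\ell)}$ with class field theory: the prime $\fpp_{i,j}$ is unramified with residue field $\bff_p$, so Proposition~\ref{prop:cft}(3) identifies its Frobenius as the power $\delta^m$ determined by $\zeta_\ell^m=\epsilon_\ell(a)^{(p-1)/\ell}$, and Proposition~\ref{prop:cft}(1) converts triviality of this Frobenius into total decomposition. The trade-offs are real: your argument is shorter and bypasses the heaviest input of the section, the integral basis of $\coo_{\crr^{(\ell)}}$, needing only the much easier determination of $\coo_{\ckk^{(\ell)}}$ (for the residue-field computation) together with unramifiedness of $\crr^{(\ell)}/k(t)$ outside $\{0,1,\infty\}$; moreover, since you compute the Frobenius exactly, you get the residue degree for free (it is the multiplicative order of $\epsilon_\ell(a)^{(p-1)/\ell}$ in $\bff_p^\times$), not merely the totally-split criterion. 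The paper's approach, while logically heavier than the lemma requires, produces the whole quotient ring $\coo_{\crr^{(\ell)}}/\fpp_{i,j}\coo_{\crr^{(\ell)}}$ explicitly and fits the paper's overall strategy of explicit integral bases. The two delicate points you flag are exactly the right ones, and you handle both correctly: $\epsilon_\ell(a)\neq 0$ because $a\neq 1$ (so $\epsilon_\ell(t)^{1/\ell}$ is a unit modulo $\fqq$ and can be cancelled), and the base residue field is honestly $\bff_p$, so the Frobenius is the genuine $p$-power map and the congruence between the two $\ell$-th roots of unity in $\bff_p$ upgrades to an equality.
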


    \begin{proof}
        To prove the lemma, we show that 
        $$\bff_p[t][\alpha_{i,j}^k]/(t^{1/\ell}-\zeta_\ell^ix,(1-t)^{1/\ell}-\zeta_\ell^j y)$$
        is isomorphic to $\bff_p[\epsilon_\ell(t)^{1/\ell}]/(\epsilon_\ell(t)-\epsilon_\ell(a))$.

        We first claim that there is an unique homomorphism $\phi:\bff_p[t][\alpha_{i,j}^k]\to \bff_p[\epsilon_\ell(t)^{1/\ell}]/(\epsilon_\ell(t)-\epsilon_\ell(a))$ by $\phi(t^{1/\ell})=\zeta_\ell^i x$ such that $\phi(t^{1/\ell})=\zeta_\ell^i x$, $\phi((1-t)^{1/\ell})=\zeta_\ell^j y$ and $\phi(\epsilon_\ell(t)^{1/\ell})=\epsilon_\ell(t)^{1/\ell}$. Recall that $$\epsilon_{\ell,n}(t)=\prod_{i=1}^{\ell-1}(1-\zeta_\ell^{i+n}t^{1/\ell})^i=\epsilon_\ell(t)\times \frac{(1-\zeta_\ell^0 t^{1/\ell})^\ell\cdots(1-\zeta_\ell^{n-1} t^{1/\ell})^\ell}{(1-t)^n}$$ and $$\alpha_{0,j}^{k} =\epsilon_{\ell,j}^{k/\ell} \prod_{n=1}^{\ell-1}(1-\zeta_{\ell}^{n+j}t^{1/\ell})^{[\frac{kn}{\ell}]}.$$ Hence, we can define $\phi$ from $\phi(t^{1/\ell})$ and $\phi((1-t)^{1/\ell})$. Also, since the minimal polynomial of $\epsilon_\ell(t)^{1/\ell}$ in $\crr^{(\ell)}/\ckk^{(\ell)}$ is $X^\ell-\epsilon_\ell(t)=0$, $\phi$ is well defined.

        Next, we show that $\phi$ induces the desired isomorphism. The homomorphism $\phi$ defined above is clearly a surjection. Hence, it is enough to show that $\ker \phi=(t^{1/\ell}-\zeta_\ell^ix,(1-t)^{1/\ell}-\zeta_\ell^j y)$. By our construction, $\ker\phi\supseteq(t^{1/\ell}-\zeta_\ell^ix,(1-t)^{1/\ell}-\zeta_\ell^j y)$. On the other hand, let $f\in \bff_p[t][\alpha_{i,j}^k]$ be an element of $\ker \phi$. Then, by the relation above, we can express $f$ as a sum of a polynomial of $\epsilon_\ell(t)^{1/\ell}$ and an element of $(t^{1/\ell}-\zeta_\ell^ix,(1-t)^{1/\ell}-\zeta_\ell^j y)$. So, we may assume that $f$ is a polynomial of $\epsilon_\ell(t)^{1/\ell}$ of degree less than $\ell$. Since $\phi(\epsilon_\ell(t)^{1/\ell})=\epsilon_\ell(t)^{1/\ell}$, we find that $f=0$. Thus, $\ker \phi\supseteq(t^{1/\ell}-\zeta_\ell^ix,(1-t)^{1/\ell}-\zeta_\ell^j y)$ holds which proves the desired isomorphism.

        From the isomorphism $$\bff_p[t][\alpha_{i,j}^k]/(t^{1/\ell}-\zeta_\ell^ix,(1-t)^{1/\ell}-\zeta_\ell^j y)\cong\bff_p[\epsilon_\ell(t)^{1/\ell}]/(\epsilon_\ell(t)-\epsilon_\ell(a)),$$ the number of prime ideals above $(t^{1/\ell}-\zeta_\ell^ix,(1-t)^{1/\ell}-\zeta_\ell^j y)$ in $\crr^{(\ell)}$ equals to the number of those above $(\epsilon_\ell(t)-\epsilon_\ell(a))$ in $\bff_p[\epsilon_\ell(t)^{1/\ell}]$. This proves the lemma. 
    \end{proof}

    Note that $\epsilon_\ell(a)^{\frac{p-1}{\ell}}=\epsilon_{\ell,n}(a)^{\frac{p-1}{\ell}}$ holds by the definition $\epsilon_{\ell,n}(t)=\epsilon_\ell(t)\times \frac{(1-\zeta_\ell^0 t^{1/\ell})^\ell\cdots(1-\zeta_\ell^{n-1} t^{1/\ell})^\ell}{(1-t)^n}$. Hence, we obtain $\epsilon_\ell(a)^{\frac{p-1}{\ell}}=A_\ell(a)$ which implies the desired result from Lemma~\ref{lem:6}.
\end{proof}

\bibliographystyle{plain}
\bibliography{ref}

\end{document}